\newtheorem{theorem}{Theorem}[section]
\newtheorem{lemma}[theorem]{Lemma}
\newtheorem{definition}[theorem]{Definition}
\newtheorem{corollary}[theorem]{Corollary}
\newtheorem{remark}[theorem]{Remark}
\newcommand{\N}{{\mathbb N}}
\newcommand{\strange}[1]{\mathrm{\tilde{#1}}}
\title[The polynomial method for list-colouring extendability]{The polynomial method for list-colouring extendability of outerplanar graphs}
\author{Przemys\l{}aw Gordinowicz}
\address{Institute of Mathematics, Lodz University of Technology, \L{}\'od\'z, Poland}
\email{pgordin@p.lodz.pl}
\author{Pawe\l{} Twardowski}
\address{Institute of Mathematics, Lodz University of Technology, \L{}\'od\'z, Poland}
\email{173450@p.lodz.pl}
\begin{document}

\begin{abstract}
We restate theorems of Hutchinson~\cite{Hutchinson} on list-colouring extendability for outerplanar graphs in terms of non-vanishing monomials in a graph polynomial, which yields an Alon-Tarsi equivalent for her work. This allows to simplify her proofs as well as obtain more general results.
\end{abstract}
\keywords{outerplanar graph, list colouring, paintability, Alon-Tarsi number.}
\subjclass[2000]{05C10, 05C15, 05C31}

\vspace{150pt}

\maketitle

\section{Introduction}
In his famous paper~\cite{Thomassen} Thomassen proved that every planar graph is 5-choosable. Actually, to proceed with an inductive argument, he proved the following stronger result.
\begin{theorem}[\cite{Thomassen}]\label{thm:thomassen}
Let $G$ be any plane near-triangulation (every face except the outer one is a triangle) with outer cycle $C$. Let $x$, $y$ be two consecutive vertices on $C$. Then $G$ can be coloured from any list of colours such that the length of lists assigned to $x$, $y$, any other vertex on $C$ and any inner vertex is 1, 2, 3, and 5, respectively. 
\end{theorem}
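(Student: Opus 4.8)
The plan is to prove, by induction on $|V(G)|$, a slightly reformulated statement in which the vertex $y$ is \emph{precoloured} rather than merely equipped with a list of length $2$; the theorem as stated then follows at once. Indeed, since $x$ and $y$ are adjacent, in any proper colouring $y$ must avoid the unique colour of $x$, and because $|L(y)| = 2$ there is always a colour in $L(y)$ distinct from that of $x$. Fixing such a colour for $y$ reduces the problem to the version in which both $x$ and $y$ carry lists of length $1$ holding distinct colours, while every other vertex of $C$ has a list of length $3$ and every inner vertex a list of length $5$. Call this statement $(\star)$; I would prove $(\star)$ by induction, the base case (a single edge, or a triangle with no interior vertex) being immediate.

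For the inductive step I would distinguish two cases according to whether the outer cycle $C$ has a chord. Suppose first that $C$ has a chord $uv$. Then $uv$ together with the two arcs of $C$ between $u$ and $v$ bounds two near-triangulations $G_1$ and $G_2$ glued along the edge $uv$, and we may label them so that the precoloured edge $xy$ lies on the outer cycle of $G_1$. Since $G_1$ has fewer vertices than $G$ and inherits the required list lengths (with $u,v$ among its ordinary boundary vertices of list length $3$), induction colours $G_1$; in particular $u$ and $v$ receive distinct colours because they are adjacent. Now $u,v$ is a precoloured edge of the smaller near-triangulation $G_2$, so a second application of induction colours $G_2$ consistently, and the two colourings agree on $uv$, yielding a colouring of $G$.

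The substantive case is when $C$ has no chord. Write $C = v_1 v_2 \cdots v_p$ with $x = v_1$ and $y = v_2$, and focus on $v_p$, the neighbour of $x$ on the far side from $y$. Because the interior is triangulated, the neighbours of $v_p$ form a fan $v_1, w_1, \ldots, w_s, v_{p-1}$, and the absence of a chord forces $w_1, \ldots, w_s$ to be \emph{interior} vertices (so each has a list of length $5$). I would now reserve colours for $v_p$ in advance: choosing two colours $\alpha,\beta \in L(v_p) \setminus \{c\}$, where $c$ is the colour of $v_1$, I delete $\alpha$ and $\beta$ from the list of every $w_i$. Deleting $v_p$ then produces a near-triangulation $G' = G - v_p$ whose outer cycle is $v_1 v_2 \cdots v_{p-1} w_s \cdots w_1$; the vertices $w_i$ now sit on the boundary with lists of length $\ge 5 - 2 = 3$, while $x,y$ and the remaining boundary and interior vertices keep their prescribed list lengths, so $(\star)$ applies to $G'$ by induction. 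Finally I extend the resulting colouring to $v_p$: each $w_i$ avoids $\{\alpha,\beta\}$ by construction and $v_1$ has colour $c \notin \{\alpha,\beta\}$, so among the neighbours of $v_p$ only $v_{p-1}$ can use a colour of $\{\alpha,\beta\}$; it blocks at most one of them, leaving a legal colour for $v_p$.

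I expect the main obstacle to be conceptual rather than computational: the statement must be strengthened in exactly the right way before any induction can get off the ground, since ``$G$ is $5$-choosable'' alone is far too weak to carry the recursion. The delicate points are the choice of \emph{which} vertex to delete in the chordless case --- it must be adjacent to a precoloured vertex, so that after excluding its colour two reserved colours remain and at most one of them is later re-blocked --- and the verification that reserving two colours for $v_p$ still leaves the fan vertices with lists long enough ($\ge 3$) to meet the boundary requirement of $(\star)$. Everything else is bookkeeping about planar near-triangulations and the merging of faces when $v_p$ is removed.
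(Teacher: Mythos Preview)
Your argument is correct and is precisely Thomassen's original proof. Note, however, that the paper does not supply its own proof of this statement: Theorem~\ref{thm:thomassen} is quoted from~\cite{Thomassen} as background and is used only as a tool (via the shrinking argument and via Theorem~\ref{thm:zhu}), so there is nothing in the paper to compare your proof against beyond the reference itself.
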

In other words vertices $x$ and $y$ can be precoloured in different colours. 
Basically, this theorem implies that any outerplanar graph is 3-choosable. Moreover, lists of any two neighbouring vertices can have a deficiency. To formalise this fact we say that a triple $(G, x, y)$, where $G$ is outerplanar graph, $x, y \in V(G)$ are neighbouring vertices is \emph{$(1,2)$-extendable} in the sense that $G$ is colourable from any lists whose length is 1, 2 and 3 for vertex $x$, $y$ and any other vertex, respectively.  

Hutchinson~\cite{Hutchinson} analysed extendability of outerplanar graphs, in the case when the selected vertices are not adjacent, showing that for any two vertices $x, y$ of outerplanar graph $G$ a triple $(G, x, y)$ is $(2, 2)$-extendable. Of course, it is enough to prove this for outerplane 2-connected near-triangulation only, as each outerplane graph can be extended to such a graph just by adding some edges. The main theorem was the following.
\begin{theorem}[\cite{Hutchinson}]\label{thm:hutchinson}
Let $G$ be outerplane 2-connected near-triangulation and $x, y \in V(G)$, $x \neq y$. Let $C \colon V(G) \to \{1,2,3\}$ be any proper 3-colouring of $G$. Then
\begin{enumerate}
\item $(G, x, y)$ is not $(1, 1)$-extendable;
\item $(G, x, y)$ is $(1, 2)$-extendable if and only if $C(x) \neq C(y)$;
\item $(G, x, y)$ is $(2, 2)$-extendable.
\end{enumerate}
\end{theorem}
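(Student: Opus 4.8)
The plan is to translate extendability into the language of the graph polynomial $P_G=\prod_{uv\in E(G)}(z_u-z_v)$ (with edges oriented arbitrarily to fix signs) and to use the Combinatorial Nullstellensatz: if the coefficient of $\prod_v z_v^{d_v}$ in $P_G$ is non-zero, then $G$ is colourable from any lists with $|L(v)|\ge d_v+1$. Hence, to certify $(a,b)$-extendability it suffices to exhibit a non-vanishing monomial with $\deg_x\le a-1$, $\deg_y\le b-1$ and $\deg_v\le 2$ for every other vertex $v$. Since $|E(G)|=2n-3$ for a $2$-connected near-triangulation on $n$ vertices, every monomial of $P_G$ has total degree $2n-3$, and comparing this with the available budget $\sum_v d_v$ already explains the trichotomy: for $(1,1)$ the budget is only $2n-4$, for $(1,2)$ it is exactly $2n-3$ and forces the out-degree sequence $(0,1,2,\dots,2)$, and for $(2,2)$ it is $2n-2$, leaving one unit of slack. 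The positive assertions (the ``if'' part of (ii) and all of (iii)) will follow by producing such monomials; the negative assertions (part (i) and the ``only if'' of (ii)) cannot be obtained this way, and I treat them by a direct colouring obstruction instead.

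For the obstructions I first record that a $2$-connected outerplane near-triangulation is a triangulated polygon, so its weak dual is a tree and its proper $3$-colouring is unique up to a permutation of the colours. Thus every proper colouring from full lists $\{1,2,3\}$ is a permutation of $C$, and two vertices receive equal colours in every such colouring exactly when $C$ assigns them equal colours. Part (i) is then immediate: if $C(x)\ne C(y)$ set $L(x)=L(y)=\{1\}$ and give every other vertex $\{1,2,3\}$, so that no permutation of $C$ colours $x$ and $y$ alike; if $C(x)=C(y)$ set $L(x)=\{1\}$, $L(y)=\{2\}$, $\{1,2,3\}$ elsewhere, so that no permutation colours them differently. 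Either way the $(1,1)$-list has no proper colouring. The ``only if'' half of (ii) is the same idea: when $C(x)=C(y)$, the choice $L(x)=\{1\}$, $L(y)=\{2,3\}$, $\{1,2,3\}$ elsewhere, forces $x\mapsto 1$ hence $y\mapsto 1\notin L(y)$, so $(G,x,y)$ is not $(1,2)$-extendable.

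The substance is the construction of the certifying monomials, carried out by induction on $n$ via the ear structure. A maximal outerplanar graph with $n\ge 4$ vertices has at least two ears (degree-$2$ vertices whose neighbours are adjacent); deleting an ear $w$ with neighbours $u,u'$ gives a smaller maximal outerplanar graph $G'=G-w$ and factorises the polynomial as
\[
P_G=P_{G'}\,(z_w-z_u)(z_w-z_{u'}).
\]
If some ear $w$ differs from both $x$ and $y$, I take the $z_w^2$ term of the factor and multiply it by a monomial supplied for $(G',x,y)$ by the inductive hypothesis: the exponents of $x$, $y$ and of all surviving vertices are unchanged, $w$ receives exponent $2$, and no cancellation occurs because the $\deg_w=2$ part of $P_G$ is exactly $z_w^2P_{G'}$. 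Moreover the $3$-colouring of $G'$ is the restriction of that of $G$, so the status of $C(x)$ versus $C(y)$ is preserved. This settles every configuration in which an ear can be chosen away from $\{x,y\}$.

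The main obstacle is the \emph{tight} configuration in which the only two ears are $x$ and $y$ themselves, the fan with $x,y$ at its two ends being the prototype. Here one is forced to delete an ear lying in $\{x,y\}$, say $y$, and to extract the degree-one part $-z_y(z_u+z_{u'})P_{G'}$ of the factor; this lowers the budget of one neighbour of $y$ by one and converts the problem on $G'$ into one about a different distinguished pair, so the induction must be strengthened to track, for every admissible pair, both the attainable exponent bounds and the colour classes under $C$. Two difficulties concentrate precisely here. First, the summands $z_uP_{G'}$ and $z_{u'}P_{G'}$ can cancel, so the target monomial must be chosen so that only one of them contributes. Second, and this is why a topological sort does not suffice, in the tight case no acyclic orientation realises the required out-degree sequences, so one genuinely needs the Alon--Tarsi count $EE(D)\ne EO(D)$ of even versus odd Eulerian subdigraphs. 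A direct check on the four-vertex fan already isolates the right target: the sequence $(2,0,2,1)$ has coefficient $0$ (matching the failure of $(1,2)$-extendability when $C(x)=C(y)$), while $(2,1,1,1)$ has coefficient $\pm1$ (certifying $(2,2)$-extendability). Threading the colour classes through this reduction is exactly what yields the threshold $C(x)\ne C(y)$ in (ii); for (iii) the extra unit of budget always leaves room to rescue a surviving monomial, giving $(2,2)$-extendability unconditionally.
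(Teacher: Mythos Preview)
Your overall strategy matches the paper's: translate extendability into non-vanishing monomials of $P(G)$, dispose of the negative assertions by the counting/uniqueness-of-$3$-colouring argument, reduce the positive assertions to the ``fundamental'' case where $x$ and $y$ are the only degree-$2$ vertices, and handle that case by an induction that deletes the ear $y$. Your reduction via deleting an ear $w\notin\{x,y\}$ is equivalent to the paper's chord-shrinking (Hutchinson's argument), and your treatment of (i) and the ``only if'' of (ii) is correct and in fact more explicit than the paper, which simply cites Hutchinson for those directions.

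The genuine gap is the fundamental case itself. You correctly diagnose that deleting $y$ forces you to use the degree-one part $-z_y(z_u+z_{u'})P_{G'}$, that cancellation between the two summands is the danger, and that the induction must therefore be strengthened to track more than just ``some good monomial exists''. But you never state the strengthened hypothesis, let alone verify it; the final paragraph is a description of what the argument should accomplish, not the argument. The paper's Theorem~2.1 is precisely the missing lemma: writing $u,u'$ as $\tilde y,\tilde z$ (the new ear and its neighbour in $G'=G-y$), it tracks the \emph{three} coefficients $\eta_1,\eta_2,\eta_3$ of $x\,v_1^2\cdots v_n^2\,y^0z^2$, $y^1z^1$, $y^2z^0$ and proves by a three-case computation that $\{\eta_1,\eta_2,\eta_3\}=\{-1,0,1\}$ always, recording exactly how the position of the zero moves when the ear $y$ is attached. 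One then reads off (iii) (since $\eta_1$ and $\eta_2$ are never both zero) and (ii) (since $\eta_1=0$ iff $C(x)=C(y)$, established by threading the colour classes through that same case analysis). Without an explicit invariant of this kind your induction does not close: knowing only that $P_{G'}$ has \emph{some} monomial with $x^0\tilde y^1$ and the rest $\le 2$ is not enough to control the cancellation you yourself flagged. (Incidentally, the remark that ``no acyclic orientation realises the required out-degree sequences'' is not right in general and is in any case not the obstruction; the issue is purely the sign cancellation between the two $z_y$-terms.)
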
 
Indeed, it is enough to prove the above theorem for near-triangulations with exactly 2 vertices of degree 2 and to let $x$ and $y$ be these degree 2 vertices. Hutchinson called such configurations \emph{fundamental subgraphs}. Such a configuration can be obtained by successively shrinking the outerplane near-triangulation along some chord (inner edge) that separates the component of the graph not containing vertices $x$ and $y$ (in case when $xy \in E(G)$ this reduces to an edge $xy$). The general result follows now by succesive colouring of shrank parts using Theorem~\ref{thm:thomassen} --- the chord is an outer edge of the shrank component and its endpoins (already coloured) are these 2 precoloured vertices. The details are in~\cite{Hutchinson}. Also in~\cite{Hutchinson}, Hutchinson provided further results about extendability of general outerplanar graphs, for which the conditions are more relaxed than those of Theorem~\ref{thm:hutchinson}, allowing for $(1,1)$-extendability.

One important thing is that the proper 3-colouring $C$ mentioned in the theorem above is not in any way connected to possible list colouring of $G$, but is rather an inherent property of the graph. This is due to the fact that every 2-connected outerplane near triangulation has an unique (up to permutation) 3-colouring, i.e the vertices graph can be uniquely partitioned into 3 groups so that in every proper 3-colouring of the graph the vertices in the same group will always have the same colour (the groups in this partition are called \emph{colour classes}, as the partition defines an equivalence relation). The reason for this is that the graph consists entirely of triangles, and every vertex of a given triangle needs to be of different colour.
	
The situation of particular importance is when two vertices are in the same colour class. This can be forced in two ways. One, mentioned in~\cite{Hutchinson}, is the so called \emph{chain of diamonds}, where the diamond is understood as $K_4$ minus an edge. It is obviously a 2-connected outerplane near triangulation, and the two non-neighbouring vertices are always of the same colour. Therefore is we link diamonds together glueing them by the vertices of degree 2, each of the linking vertices will have the same colour. The second way is to attach a diamond to diamond along the common edge (cf.~\cite{PoThIII}). Both of those ways can be seen on Figure~\ref{fig:colouring}.

Recently, Zhu~\cite{Zhu} strengthened the theorem of Thomassen in the language of graph polynomials showing that Alon-Tarsi number of any planar graph $G$ satisfies $AT(G) \le 5$. His approach utilizes a certain polynomial arising directly from the structure of the graph. This \textit{graph polynomial} is defined as:
$$P(G) = \prod_{uv \in E(G), u<v}(u-v),$$
where the relation $<$ fixes an arbitrary orientation of graph $G$. Here we understand $u$ and $v$ both as the vertices of $G$ and variables of $P(G)$, depending on the context. Notice that the  orientation affects the sign of the polynomial only. Therefore individual monomials and the powers of the variables in each monomials are orientation-invariant. We refer the reader to~\cite{combnull, AlonTarsi, Schautz} for the connection between list colourings and graph polynomials.
The approach of Zhu may be described in the following form, analogous to Theorem~\ref{thm:thomassen}. 
\begin{theorem}[\cite{Zhu}]\label{thm:zhu}
Let $G$ be any plane near-triangulation, let $e = xy$ be a boundary edge of $G$. Denote other boundary vertices by $v_1, \dots, v_k$ and inner vertices by $u_1, \dots, u_m$. Then the graph polynomial of $G-e$ contains a non-vanishing monomial of the form $\eta x^0y^0v_1^{\alpha_1}\dots v_k^{\alpha_k} u_1^{\beta_1}\dots u_m^{\beta_m}$ with $\alpha_i \le 2, \beta_j \le 4$ for $i \le k$, $j \le m$.
\end{theorem}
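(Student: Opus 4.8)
The plan is to read the statement through the Combinatorial Nullstellensatz: a non-vanishing monomial $\eta\, x^0 y^0 \prod_i v_i^{\alpha_i}\prod_j u_j^{\beta_j}$ of $P(G-e)$ with $\alpha_i\le 2$, $\beta_j\le 4$ is precisely an Alon--Tarsi certificate that $G-e$ is colourable from any lists of sizes $1,1,3,5$ on $x$, $y$, the remaining boundary vertices and the inner vertices (see~\cite{combnull,AlonTarsi,Schautz}). Thus the theorem is the polynomial avatar of Theorem~\ref{thm:thomassen}, and I would prove it by the same induction on $|V(G)|$ that Thomassen uses, replacing ``extend the colouring'' by ``the target coefficient stays non-zero''. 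The one algebraic fact driving every step is multiplicativity: if the edge set of a graph $H$ splits as a disjoint union $E(H_1)\sqcup E(H_2)$ with $H_1,H_2$ sharing only vertices, then $P(H)=P(H_1)P(H_2)$, so the coefficient of a monomial of $P(H)$ is the convolution, over all ways of distributing the exponents of the shared variables, of coefficients of $P(H_1)$ and $P(H_2)$.

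Following Thomassen I split according to whether the outer cycle has a chord. If there is a chord $ab$, it separates $G$ into a part $G_1$ carrying $e$ and a part $G_2$; I assign the edge $ab$ to $G_1$, so that $E(G-e)=E(G_1-e)\sqcup E(G_2-ab)$. Applying the induction to $G_2$ with $ab$ as its distinguished boundary edge produces a non-vanishing monomial of $P(G_2-ab)$ in which \emph{both} $a$ and $b$ carry exponent $0$; because the shared variables $a,b$ then receive all of their exponent from the $G_1$-factor, the convolution collapses to a single product and no cancellation can occur. It remains to feed $G_1$ (with distinguished edge $e$) into the induction, which supplies $x^0 y^0$ and keeps $a,b$ --- now ordinary boundary vertices of $G_1$ --- at exponent $\le 2$; multiplying the two monomials gives the required monomial for $G$.

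The substantive case is a chordless outer cycle. Here I choose a boundary vertex $t$ adjacent to a precoloured vertex, say $x$; its interior neighbours $p_1,\dots,p_s$ form a fan and become boundary vertices of the smaller near-triangulation $G'=G-t$, while its other cycle-neighbour $r$ stays on the boundary. The induction applied to $G'$ yields a monomial $M'$ with $x^0 y^0$, the exposed $p_i$ at exponent $\le 2$ and $r$ at exponent $\le 2$. I then re-introduce $t$, multiplying by $Q=(t-x)(t-r)\prod_i(t-p_i)$, and select from $Q$ the term $t^2\prod_i p_i$: this forces $t$ to take its whole (bounded) exponent $2$ from the edges to $x$ and $r$, leaves $x$ at $0$, and raises each $p_i$ by $1$. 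Since the $p_i$ are interior in $G$ their budget is $4$, so passing from $\le 2$ to $\le 3$ is harmless, and $t$ itself, a boundary vertex, sits at exponent $2\le 2$.

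The main obstacle is exactly the non-cancellation at this last step. Unlike the chord case, the shared variables $x,r,p_1,\dots,p_s$ are not all zero in one factor, so the coefficient of the target monomial is a genuine sum, schematically $(-1)^{s}\bigl(c_{M'}+\sum_i c_{M' p_i/r}\bigr)$, where $c$ denotes a coefficient of $P(G'-e)$ and the extra terms arise from letting the edge $tr$, rather than an edge $tp_i$, feed the exponent of $t$. Showing this sum is non-zero is the heart of the matter, and it is where Thomassen's device of deleting, from the lists of the exposed vertices, the two colours reserved for $t$ reappears in algebraic form: the reserved slack on the $p_i$ is what must force the competing terms not to annihilate $c_{M'}\neq 0$. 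Concretely I would either strengthen the inductive statement so that $r$ can be driven to exponent $0$ (making the terms $c_{M' p_i/r}$ formally absent and the convolution a single non-vanishing product), or evaluate the sum directly by identifying it with one non-vanishing coefficient of an auxiliary product; pinning down this step is the only place where more than routine bookkeeping is required.
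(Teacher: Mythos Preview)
The paper does not prove Theorem~\ref{thm:zhu}; it quotes it from~\cite{Zhu} and uses it as a black box (for instance inside the proof of Corollary~\ref{cor:Hutch}). So there is no ``paper's own proof'' to compare your proposal against.

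On the proposal itself: the chord case is correct and is exactly the multiplicativity argument the paper exploits in Corollary~\ref{cor:Hutch}. The chordless case, however, is left with a real gap that you yourself flag. Neither of your two suggested escapes works as stated. Driving $r$ to exponent~$0$ would mean carrying an inductive hypothesis with \emph{three} boundary vertices at exponent~$0$; via the Nullstellensatz this is a demand to precolour three boundary vertices, which already fails for small near-triangulations (and is strictly stronger than Theorem~\ref{thm:thomassen}), so that strengthening cannot be self-sustaining. The second suggestion (``identify the sum with one non-vanishing coefficient of an auxiliary product'') is too vague to count as a plan: the competing terms $c_{M'p_i/r}$ are coefficients of monomials that genuinely exist in $P(G'-e)$ and need not share a sign with $c_{M'}$, so without an additional structural constraint there is no reason the alternating sum avoids zero.

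What actually closes the gap in~\cite{Zhu} is a \emph{different} strengthening: one does not ask for an extra exponent~$0$, but rather keeps track of how the exponent budget is distributed between $t$'s boundary neighbour and the exposed fan vertices, in effect proving the analogue of Thomassen's asymmetric $(1,2)$-hypothesis on the pair $(x,y)$ rather than the symmetric $(1,1)$ on $(x,y)$ for $G-e$. Concretely, proving the equivalent statement for $P(G)$ with $x^{0}y^{1}$ (lists of sizes $1$ and $2$) lets the induction choose which endpoint of the new boundary edge plays the role of $y$, and that freedom is exactly what kills the cross-terms. Your write-up already contains the right intuition (``Thomassen's device of deleting two colours from the exposed vertices reappears in algebraic form''), but the device has to be encoded in the inductive hypothesis, not applied after the fact.
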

The main tool connecting graph polynomials with list colourings is Combinatorial Nullstellensatz \cite{combnull}. It implies that for every non-vanishing monomial of $P(G)$, if we assign to each vertex of $G$ a list of length greater than the exponent of corresponding variable in that monomial, then such list assignment admits a proper colouring.

We note that this approach can be continued, allowing one to obtain stronger equivalents of already known results for list-colouring. Moreover, in~\cite{GrytczukZhu} where it is proven that every planar graph $G$ contains a matching $M$ such that $AT(G-M) \le 4$, one can find an example that with this approach it is possible to get results that are not known (or hard to prove) for ordinary list colouring.

In this paper we provide a graph polynomial analogue to the result of Hutchinson, obtaining a characterisation of polynomial extendability for outerplanar graphs, which may be presented in the form of the following theorem.
\begin{theorem} \label{thm:general2}
Let $G$ be any outerplanar graph with $V(G) = 
\{x, y, v_1, \dots, v_n\}$. Then in $P(G)$ there is a non-vanishing monomial of the form $\eta x^\beta y^\gamma \prod_{i=1}^n v_i^{\alpha_i}$ with $\alpha_i \le 2$, $\beta, \gamma \le 1$ satisfying:
\begin{enumerate}
\item $\beta = \gamma = 1$ when every proper $3$-colouring $C$ of $G$ forces $C(x) = C(y)$;
\item $\beta + \gamma = 1$ when every proper $3$-colouring $C$ of $G$ forces  $C(x) \neq C(y)$; 
\item $\beta = \gamma = 0$ otherwise.
\end{enumerate}
\end{theorem}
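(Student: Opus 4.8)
My plan is to prove the statement with the Alon--Tarsi method, working directly with the coefficients of $P(G)$ rather than only with choosability. The basic tool is the identity $P(G)=P(G-v)\cdot\prod_{u\in N(v)}(v-u)$, which lets one build a prescribed monomial of $P(G)$ from monomials of $P(G-v)$ by selecting a term of $\prod_{u\in N(v)}(v-u)$. The first step is an elementary but crucial \emph{clean peeling} observation: if $v\notin\{x,y\}$ has degree at most $2$ and we assign it its top exponent $\deg(v)\le 2$, then $v^{\deg v}M$ has the same coefficient in $P(G)$ as $M$ has in $P(G-v)$, since the top term $v^{\deg v}$ is the unique source of that power of $v$ and no other term competes; in particular there is no cancellation. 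I would also record that such a peel preserves the colour--forcing relation on $\{x,y\}$: as $v$ has at most two neighbours, every proper $3$-colouring of $G-v$ extends to $v$ and conversely, so $G$ and $G-v$ realise exactly the same colour patterns on the pair $x,y$.

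Granting this, I clean-peel exhaustively every vertex of degree at most $2$ distinct from $x$ and $y$. Since an outerplanar graph on at least two vertices always has two vertices of degree at most $2$, the process terminates at a \emph{core} $H$ in which $x$ and $y$ are the only possible vertices of degree below $3$. By the lemma it suffices to produce the required monomial for $H$ (the peeled factors contribute only exponents $\le 2$ on vertices other than $x,y$), and $H$ carries the same forcing relation as $G$. The cores are the degenerate graphs (a single edge $xy$, isolated $x,y$, a triangle, a $4$-cycle, a diamond) together with $2$-connected maximal outerplanar graphs whose weak dual is a path, so that $x,y$ are the two ears (snakes of triangles, such as fans), possibly joined at cut-vertices $z\ne x,y$, at which $P$ factors over the blocks that share only the variable $z$.

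On the core I would split along the three colour cases and realise the prescribed exponents as the out-degrees of an orientation $D$, using that $\prod_v v^{d_v}$ is non-vanishing precisely when some orientation with $\deg^+=d_v$ has unequal numbers of even and odd Eulerian sub-digraphs. When $x,y$ are \emph{free} the core is not triangulated --- the freedom comes from a face of length $\ge 4$, i.e.\ a missing chord --- and exactly this lets one orient $H$ \emph{acyclically} with $x,y$ as sinks and every other out-degree $\le 2$; the only Eulerian sub-digraph is then the empty one, so $\beta=\gamma=0$ and the coefficient is $\pm1$. In the two rigid cases an edge count on the triangulated core forces $\beta+\gamma\ge 1$: one obtains $\beta+\gamma=1$ when $x,y$ are \emph{forced different} and $\beta=\gamma=1$ when they are \emph{forced equal}, and the realising orientations are necessarily \emph{cyclic}.

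The main obstacle is precisely this non-cancellation in the rigid cases. Already for the diamond the monomial $xy\,a^{2}b$ is witnessed only by cyclic orientations --- no acyclic orientation realises the out-degrees $(x,y,a,b)=(1,1,2,1)$ --- so its coefficient is a genuine alternating count $EE-OE$ rather than a single acyclic term. I would handle this by a secondary induction that removes a forcing chord $e=pq$ via $P(H)=P(H-e)\,(p-q)$: deleting $e$ relaxes the case (for the diamond, $H-e$ is the free $4$-cycle and re-enters the clean reduction), and the target coefficient becomes the difference $\operatorname{coeff}_{H-e}(M/p)-\operatorname{coeff}_{H-e}(M/q)$ of two coefficients in the simpler graph. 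For the diamond this difference is $2-1=1\ne0$, and the whole difficulty is to prove in general that these two contributions never coincide --- equivalently, that the colour-forcing dichotomy matches exactly which out-degree split $(\beta,\gamma)$ is achievable with a non-vanishing coefficient. Controlling this difference, i.e.\ pinning down $EE-OE$ for the cyclic orientations arising from the unique $3$-colouring of the core, is the technical heart of the argument.
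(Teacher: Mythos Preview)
Your clean-peeling reduction is sound and is essentially the paper's passage to a fundamental $x$--$y$ subgraph. The gap is in your treatment of the \emph{free} case. You assert that when $C(x),C(y)$ are not forced the core must contain a face of length $\ge 4$, and that this yields an acyclic orientation with $x,y$ as sinks and all other out-degrees $\le 2$. Both assertions fail when the freedom comes from a cutvertex rather than a long face. Take the near-triangulation $B$ on $u_0,\dots,u_4$ with triangles $u_0u_1u_2$, $u_1u_2u_3$, $u_2u_3u_4$; here $u_0,u_4$ are the ears and $C(u_0)\ne C(u_4)$. Glue two copies of $B$ at $z:=u_4$ to form $H$, with $x,y$ the two remaining ears. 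Then $H$ is its own core (every vertex except $x,y$ has degree $\ge 3$), every inner face is a triangle, yet $x,y$ are free: each is forced to differ from $C(z)$ but may independently take either remaining colour. An edge count in each block forces $d^+(z)=1$ on each side, and one checks that the three orientations of $B$ with $d^+(u_0)=0$, $d^+(u_4)=1$, $d^+(u_i)=2$ for $i=1,2,3$ all contain a directed $3$- or $4$-cycle. Hence no acyclic orientation of $H$ realises $\beta=\gamma=0$ with the remaining out-degrees $\le 2$; the coefficient you need is a genuine $EE-OE$ count, exactly the difficulty you flag only for the rigid cases.

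The paper avoids orientations entirely. It first pins down the three relevant coefficients in a single triangulated block (Theorem~\ref{thm:algebraic}), then glues blocks at cutvertices via the factorisation $P(H)=\prod_i P(B_i)$ together with the homogeneity observation of Lemma~\ref{lem:l2}: since the blocks share only the variable $z$ and each $P(B_i)$ has fixed total degree, the split of the exponent at $z$ is forced, so a product of non-zero block coefficients cannot cancel. That device disposes of your counterexample immediately (two factors of $\pm 1$ coming from Corollary~\ref{cor:Hutch}(ii)), and analogous splittings along chords and non-triangular faces cover the remaining cores. If you want to rescue the orientation approach, you will need this same cutvertex factorisation to reduce the free case to individual blocks, and in each block you are then back to controlling $EE-OE$ for cyclic orientations; so the ``technical heart'' you isolate for the rigid cases is in fact needed in all three.
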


\begin{figure}[htb]
	\begin{center}
		\includegraphics[width = 0.9\textwidth]{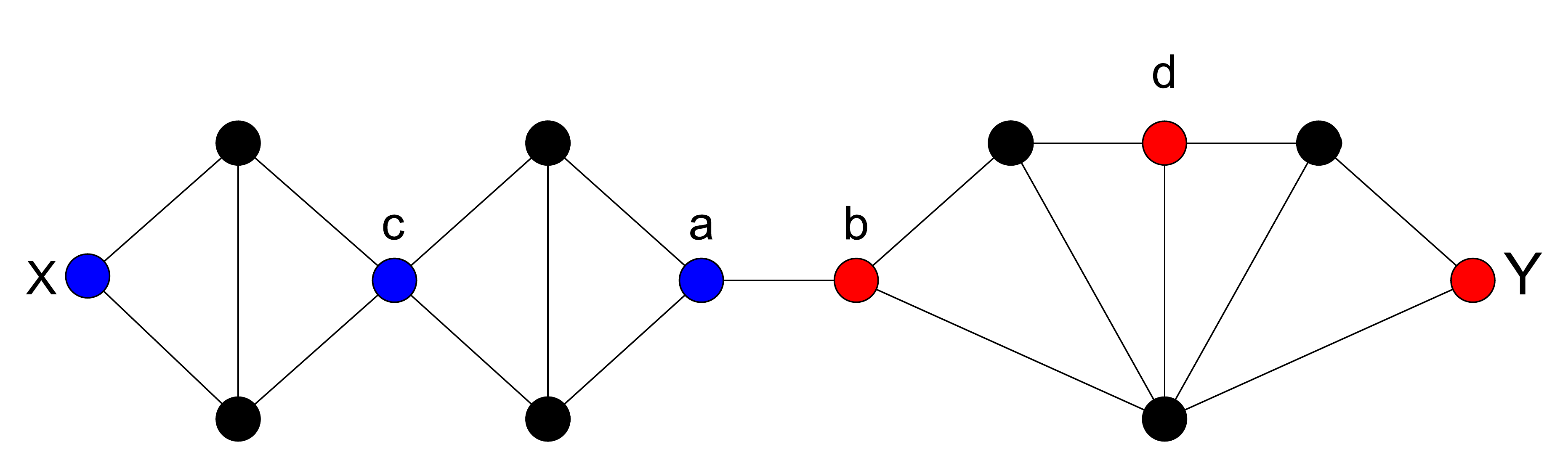}
	\end{center}
	\caption{An example of a graph satisfying conditions of point $ii)$ of Theorem~\ref{thm:general2}. When 3-colouring the graph, vertices $a$ and $b$ need to be in different colours. Vertices $x$ and $c$ are in the same colour class as $a$ (an example of the chain of diamonds), while $y$ and $d$ are in the same colour class as $b$ (the diamonds are linked along an edge). Therefore $x$ and $y$ have different colours in every proper 3-colouring of the graph. The black vertices are yet to be coloured.} \label{fig:colouring}
\end{figure}

We note that our proofs are simpler than the ones of Hutchinson, which show the strength of the graph polynomial method for graph colouring problems. All considered graphs are simple, undirected, and finite. For background in graph theory see~\cite{West}. 

\section{Outerplane near-triangulations}
In this section we provide a graph polynomial analogue to Theorem~\ref{thm:hutchinson}. The main tool is the following theorem.

\begin{theorem}\label{thm:algebraic}
		Let $G$ be a triangle or any $2$-connected, outerplane near-triangulation with exactly two vertices of degree 2. Let $z \in V(G)$ be any neighbour of a degree 2 vertex. Denote $V(G)=\lbrace x, y, z, v_1, \dots, v_n \rbrace$, where $deg(x)=deg(y)=2$, $yz \in E(G)$, $y$, $z$ $\ne x$. Then
		 $$P(G)=Q(G) + \eta_1 x v_1^2 \dots v_n^2 y^0 z^2 + \eta_2 x v_1^2 \dots v_n^2 y^1 z^1 + \eta_3 x v_1^2 \dots v_n^2 y^2 z^0,$$ where $\lbrace \eta_1, \eta_2, \eta_3 \rbrace = \lbrace -1,0,1 \rbrace$, while $Q(G)$ is a sum of monomials of the form $\eta x^{\alpha_x} v_1^{\alpha_1} \dots v_n^{\alpha_n} y^{\alpha_y} z^{\alpha_z}$, $\eta \ne 0$, with $(\alpha_x, \alpha_1, \dots, \alpha_n) \ne (1,2, \dots, 2)$.
	\end{theorem}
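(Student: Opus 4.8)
The plan is to prove the statement by induction on $|V(G)|$, peeling off the degree-$2$ vertex $y$, and to reduce the whole theorem to a single triple of coefficients. First I would record two structural facts. Since $P(G)=\prod_{uv\in E(G)}(u-v)$ is a product of linear forms, it is homogeneous of degree $|E(G)|=2n+3$; consequently, once we fix $\alpha_x=1$ and $\alpha_1=\dots=\alpha_n=2$ (which already accounts for $2n+1$ in the total degree), the remaining exponents must satisfy $\alpha_y+\alpha_z=2$, and since $\deg y=2$ we have $\alpha_y\le 2$. Hence the monomials of $P(G)$ with profile $(\alpha_x,\alpha_1,\dots,\alpha_n)=(1,2,\dots,2)$ are exactly the three displayed ones, everything else is collected into $Q(G)$, and the theorem collapses to the claim $\{\eta_1,\eta_2,\eta_3\}=\{-1,0,1\}$. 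The base case $G=$ triangle is a one-line expansion of $(x-y)(y-z)(x-z)$, whose relevant coefficients form, up to the orientation sign, the triple $(1,0,-1)$.

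For the inductive step ($|V(G)|\ge 4$) I would factor out the two edges at $y$. Writing $y'$ for the second neighbour of $y$, the triangle $\{y,z,y'\}$ is a face, so $zy'\in E(G)$ and $P(G)=\pm(y-z)(y-y')P(H)$ with $H=G-y$. Expanding $(y-z)(y-y')=y^2-(z+y')y+zy'$ and reading off the coefficients of $y^2$, $y^1$, $y^0$ (each time homogeneity of $P(H)$ pins down the power of $z$) yields, for a single global sign $\epsilon=\pm1$,
\[
(\eta_1,\eta_2,\eta_3)=\epsilon\bigl(B,\,-(A+B),\,A\bigr),
\]
where $A$ is the coefficient of $x v_1^2\cdots v_n^2$ in $P(H)$ and $B$ is the coefficient of $x v_1^2\cdots v_k^1\cdots v_n^2\,z$ in $P(H)$, with $v_k=y'$. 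In particular $\eta_1+\eta_2+\eta_3=0$, so it suffices to show that $A$ and $B$ are two of the values $-1,0,1$ which, together with $-(A+B)$, exhaust the set $\{-1,0,1\}$.

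The heart of the argument is to recognise $A$ and $B$ as special coefficients of $H$. Removing the ear $y$ turns the face $\{z,y',t\}$ sharing the edge $zy'$ into the new ear, so $H$ is again a triangle or a $2$-connected outerplane near-triangulation whose two degree-$2$ vertices are $x$ and some $w\in\{z,y'\}$. I would apply the induction hypothesis to $H$ with $x$ in the role of the first degree-$2$ vertex, $w$ in the role of $y$, and — using the edge $zy'\in E(H)$ — the remaining element of $\{z,y'\}$ in the role of $z$. Matching exponents then identifies $A$ and $B$ with two of $\eta_1^H,\eta_2^H,\eta_3^H$ (the splits $(0,2)$ and $(1,1)$ when $w=z$, the splits $(2,0)$ and $(1,1)$ when $w=y'$), while $-(A+B)$ equals the third one because $\eta_1^H+\eta_2^H+\eta_3^H=0$. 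Thus $(\eta_1,\eta_2,\eta_3)$ is, up to the sign $\epsilon$, a permutation of the triple for $H$, and the induction hypothesis $\{\eta_i^H\}=\{-1,0,1\}$ closes the argument.

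I expect the main obstacle to be the bookkeeping in this last step: one must verify that deleting an ear keeps the graph in the admissible class (triangle, or $2$-connected near-triangulation with exactly two degree-$2$ vertices), correctly locate the new ear $w\in\{z,y'\}$, and choose the role of $z$ for $H$ so that $A$ and $B$ land precisely on special monomials — the edge $zy'$ being exactly what guarantees the chosen vertex is a legitimate neighbour of $w$. The sign and permutation bookkeeping is harmless, since the target set $\{-1,0,1\}$ is symmetric under sign and the relation $\eta_1+\eta_2+\eta_3=0$ makes the correspondence with $H$ transparent.
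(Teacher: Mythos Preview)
Your proof is correct and follows essentially the same approach as the paper: induction on $|V(G)|$ by deleting the ear $y$ and factoring $P(G)=\pm(y-z)(y-y')P(G-y)$, then applying the hypothesis to $G-y$ with the new ear $w\in\{z,y'\}$ and its neighbour. Your execution is in fact somewhat cleaner than the paper's, since you use homogeneity to isolate the three relevant monomials a priori and the identity $\eta_1+\eta_2+\eta_3=0$ to collapse the paper's three explicit case expansions into the single formula $(\eta_1,\eta_2,\eta_3)=\epsilon\bigl(B,-(A+B),A\bigr)$, but the underlying inductive mechanism is identical.
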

	
	\begin{proof}
		The proof is done by induction on $n$. For the base step $(n=0)$, let $G$ be a triangle on vertices $\lbrace x,y,z \rbrace$. It is easy to check, that:
		\begin{center}
			$P(G) = (x - y)(y - z)(x - z) = x^2 y^1 z^0 - x^2 y^0 z^1 + x^1 y^0 z^2 - x^1 y^2 z^0 + x^0 y^2 z^1 - x^0 y^1 z^2  = Q(G) + x^1 y^0 z^2 - x^1 y^2 z^0$,
		\end{center}
		hence we have $\eta_2 = 0$ and $\lbrace \eta_1, \eta_3 \rbrace = \lbrace 1, -1 \rbrace$, and with $Q(G)$ having necessary form, $G$ is concordant with the thesis.
		
		We now proceed with the induction. Let $n \in \N$ and suppose the theorem holds for graphs on at most $n+3$ vertices. Let $G'$ be any 2-connected, outerplane near-triangulation on $n+4$ vertices and $x, y \in V(G')$ be the only two vertices of degree 2. Notice that $x$ and $y$ cannot be adjacent (their common neighbour would then be a cutvertex, thus violating 2-connectivity). Let $z$ and $v_{n+1}$ be the neighbours of $y$. There is $deg(z), deg(v_{n+1}) \ge 3$ and (because $G'$ is triangulated) $zv_{n+1} \in E(G')$. Now consider $G = G' - y$. Note that $G$ remains 2-connected outerplane near-triangulation. As outerplanar graph should have at least 2 vertices of degree at most 2, one of neighbours of $y$ has now degree 2. Let us name it $\strange{y}$, while the second one --- $\strange{z}$. Notice that due to triangularity and 2-connectiveness, we have $deg(\strange{z}) > 2$ (with an exception when $G$ is a triangle), as $\strange{y}$ and $\strange{z}$ have a common neighbour.
		Now, we may consider $P(G)$ using the inductive assumption. There are three possible cases:

	\noindent \textit{1. $\strange{\eta}_1 = 0$}. As $\strange{\eta}_1 = 0$ and $\lbrace \strange{\eta}_2, \strange{\eta}_3 \rbrace = \lbrace -1, 1 \rbrace$, we know that:
		\begin{center}
			$P(G) = Q(G) +  \strange{\eta}_2 x v_1^2 \dots v_n^2 \strange{y}^1 \strange{z}^1 + \strange{\eta}_3 x v_1^2 \dots v_n^2 \strange{y}^2 \strange{z}^0 = Q(G) + \strange{\eta}_2 x v_1^2 \dots v_n^2 \strange{y}^1 \strange{z}^1 - \strange{\eta}_2 x v_1^2 \dots v_n^2 \strange{y}^2 \strange{z}^0 =$\\
			$Q(G) + \strange{\eta}_2 x v_1^2 \dots v_n^2(\strange{y}^1\strange{z}^1 - \strange{y}^2\strange{z}^0).$
		\end{center}
		Now, $P(G') = P(G) (\strange{y}-y) (\strange{z}-y) = P(G)(\strange{y}\strange{z} - \strange{y}y - \strange{z}y +y^2)$, thus:
		\begin{center}
			$P(G') = (Q(G) + \strange{\eta}_2 x v_1^2 \dots v_n^2(\strange{y}^1\strange{z}^1 - \strange{y}^2\strange{z}^0))(\strange{y}\strange{z} - \strange{y}y - \strange{z}y +y^2) = Q(G)(\strange{y}\strange{z} - \strange{y}y - \strange{z}y +y^2) + \strange{\eta}_2 x v_1^2 \dots v_n^2(\strange{y}^2 \strange{z}^2 y^0 - \strange{y}^2 \strange{z}^1 y^1 - \strange{y}^1 \strange{z}^2 y^1 + \strange{y}^1 \strange{z}^1 y^2 - \strange{y}^3 \strange{z}^1 + \strange{y}^3 y^1 + \strange{y}^2 \strange{z}^1 y^1 - \strange{y}^2 \strange{z}^0 y^2) = Q'(G') + \strange{\eta}_2 x v_1^2 \dots v_n^2(\strange{y}^2 \strange{z}^2 y^0 - \strange{y}^1 \strange{z}^2 y^1 - \strange{y}^2 \strange{z}^0 y^2)$.
		\end{center}
		Now either $z = \strange{y}$ and $v_{n+1} = \strange{z}$, respectively, or the inverse may occur. In the first case, we have:
		\begin{center}
			$P(G') = Q'(G') + \strange{\eta}_2 x v_1^2 \dots v_n^2(v_{n+1}^2 y^0 z^2 - v_{n+1}^2 y^1 z^1 - v_{n+1}^0 y^2 z^2)$,
		\end{center}	 
		thus $\lbrace \eta_1, \eta_2 \rbrace = \lbrace -1, 1 \rbrace$ and $\eta_3 = 0$, with the last monomial going into $Q'(G')$. With analogous calculations, in the second case we have $\lbrace \eta_1, \eta_3 \rbrace = \lbrace -1, 1 \rbrace$ and $\eta_2 = 0$. As $Q'(G')$ obviously contains only monomials of the form $\eta x^{\alpha_x} v_1^{\alpha_1} \dots v_{n+1}^{\alpha_{n+1}} y^{\alpha_y} z^{\alpha_z}$, $\eta \ne 0$, $(\alpha_x, \alpha_1, \dots, \alpha_{n+1}) \ne (1,2, \dots, 2)$, it can assume the role of $Q(G)$, and the case is finished.
		 
	\noindent \textit{2. $\strange{\eta}_2 = 0$}. As $\strange{\eta}_2 = 0$ and $\lbrace \strange{\eta}_1, \strange{\eta}_3 \rbrace = \lbrace -1, 1 \rbrace$, we know that:
		\begin{center}
			$P(G) = Q(G) +  \strange{\eta}_1 x v_1^2 \dots v_n^2 \strange{y}^0 \strange{z}^2 + \strange{\eta}_3 x v_1^2 \dots v_n^2 \strange{y}^2 \strange{z}^0 = Q(G) + \strange{\eta}_1 x v_1^2 \dots v_n^2 \strange{y}^0 \strange{z}^2 - \strange{\eta}_1 x v_1^2 \dots v_n^2 \strange{y}^2 \strange{z}^0 =$\\
			$Q(G) + \strange{\eta}_1 x v_1^2 \dots v_n^2(\strange{y}^0\strange{z}^2 - \strange{y}^2\strange{z}^0).$
		\end{center}
		And then:
		\begin{center}
			$P(G') = (Q(G) + \strange{\eta}_1 x v_1^2 \dots v_n^2(\strange{y}^0\strange{z}^2 - \strange{y}^2\strange{z}^0))(\strange{y}\strange{z} - \strange{y}y - \strange{z}y +y^2) = Q(G)(\strange{y}\strange{z} - \strange{y}y - \strange{z}y +y^2) + \strange{\eta}_1 x v_1^2 \dots v_n^2(\strange{y}^1 \strange{z}^3 y^0 - \strange{y}^1 \strange{z}^2 y^1 - \strange{y}^0 \strange{z}^3 y^1 + \strange{y}^0 \strange{z}^2 y^2 - \strange{y}^3 \strange{z}^1 y^0 + \strange{y}^3 \strange{z}^0 y^1 + \strange{y}^2 \strange{z}^1 y^1 - \strange{y}^2 \strange{z}^0 y^2) = Q'(G') + \strange{\eta}_1 x v_1^2 \dots v_n^2(\strange{y}^0 \strange{z}^2 y^2 - \strange{y}^1 \strange{z}^2 y^1 - \strange{y}^2 \strange{z}^0 y^2 + \strange{y}^2 \strange{z}^1 y^1)$.
		\end{center}
	Continuing as in case 1, when $z =\strange{y}$ and $v_{n+1}= \strange{z}$, respectively, we have $\lbrace \eta_2, \eta_3 \rbrace = \lbrace -1, 1 \rbrace$ and $\eta_1 = 0$. In the inverse case, when $v_{n+1} = \strange{y}$ and $z = \strange{z}$, there is $\lbrace \eta_2, \eta_3 \rbrace = \lbrace 1, -1 \rbrace$ and $\eta_1 = 0$. $Q'(G')$ can again assume the role of $Q(G)$, and this case is also done.
	
	\noindent \textit{3. $\strange{\eta}_3 = 0$}. This case is handled analogously as $\strange{\eta}_1 = 0$, interchanging the roles of $\strange{y}$ and $\strange{z}$. 
		Here we have:
		\begin{center}
			$P(G) = Q(G) + \strange{\eta}_1 x v_1^2 \dots v_n^2 \strange{y}^0 \strange{z}^2 +  \strange{\eta}_2 x v_1^2 \dots v_n^2 \strange{y}^1 \strange{z}^1 = Q(G) + \strange{\eta}_2 x v_1^2 \dots v_n^2 \strange{y}^1 \strange{z}^1 - \strange{\eta}_2 x v_1^2 \dots v_n^2 \strange{y}^0 \strange{z}^2 =$\\
			$Q(G) + \strange{\eta}_2 x v_1^2 \dots v_n^2(\strange{y}^1\strange{z}^1 - \strange{y}^0\strange{z}^2).$
		\end{center}
	And then:
	\begin{center}
		$P(G') = (Q(G) + \strange{\eta}_2 x v_1^2 \dots v_n^2(\strange{y}^1\strange{z}^1 - \strange{y}^0\strange{z}^2))(\strange{y}\strange{z} - \strange{y}y - \strange{z}y +y^2) = Q(G)(\strange{y}\strange{z} - \strange{y}y - \strange{z}y +y^2) + \strange{\eta}_2 x v_1^2 \dots v_n^2(\strange{y}^2 \strange{z}^2 y^0 - \strange{y}^2 \strange{z}^1 y^1 - \strange{y}^1 \strange{z}^2 y^1 + \strange{y}^1 \strange{z}^1 y^2 - \strange{y}^1 \strange{z}^3 + \strange{y}^1 \strange{z}^2 y^1 + \strange{z}^3 y^1 - \strange{y}^0 \strange{z}^2 y^2) = Q'(G') + \strange{\eta}_2 x v_1^2 \dots v_n^2(\strange{y}^2 \strange{z}^2 y^0 - \strange{y}^2 \strange{z}^1 y^1 - \strange{y}^0 \strange{z}^2 y^2)$.
	\end{center}
		 Finally, when $z =\strange{y}$ and $v_{n+1}= \strange{z}$, respectively, we have $\lbrace \eta_1, \eta_3 \rbrace = \lbrace -1, 1 \rbrace$ and $\eta_2 = 0$. In the inverse case, when $v_{n+1} = \strange{y}$ and $z = \strange{z}$, there is $\lbrace \eta_1, \eta_2 \rbrace = \lbrace -1, 1 \rbrace$ and $\eta_3 = 0$.

Therefore, in each case we have the desired form of the polynomial, thus completing the inductive argument.
	\end{proof}

Recall that by Combinatorial Nullstellensatz, $(i, j)$-extendability of $(G,x,y)$ can be expressed as the fact that there is a non-vanishing monomial in $P(G)$ where exponents of $x$ and $y$ are $i-1$ and $j-1$, respectively, and every other exponent is less than 3.
We obtain an analogue to Theorem~\ref{thm:hutchinson} as the following
\begin{corollary}\label{cor:Hutch}
Let $G$ be any $2$-connected, outerplane near-triangulation with $V(G)=\lbrace  x, y, v_1, \dots, v_n \rbrace$. Let $C \colon V(G) \to \{1,2,3\}$ be any proper 3-colouring of $G$. Then in the graph polynomial $P(G)$
\begin{enumerate}
\item there is no monomial of the form $\eta x^0 y^0 \prod_{i=1}^n v_i^{\alpha_i}$ with $\alpha_i \le 2$;
\item the monomial of the form $\eta x^1 y^0 \prod_{i=1}^n v_i^{\alpha_i}$ with $\alpha_i \le 2$ does not vanish if and only if $C(x) \neq C(y)$;
\item there is non-vanishing monomial of the form $\eta x^\beta y^\gamma \prod_{i=1}^n v_i^{\alpha_i}$ with $\alpha_i \le 2$, $\beta, \gamma \le 1$.
\end{enumerate}
\end{corollary}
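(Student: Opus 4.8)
The plan is to dispatch (i) by a degree count and to reduce (ii) and (iii) to the fundamental case already settled by Theorem~\ref{thm:algebraic}, after recording how the colour classes evolve. Since $G$ is a maximal outerplanar graph on $n+2$ vertices we have $|E(G)|=2(n+2)-3=2n+1$, so $P(G)$ is homogeneous of degree $2n+1$. A monomial $\eta x^0y^0\prod_i v_i^{\alpha_i}$ with $\alpha_i\le 2$ has degree at most $2n<2n+1$ and hence cannot occur, which is (i). The same count pins down the relevant monomials: any admissible $\eta x^\beta y^\gamma\prod_i v_i^{\alpha_i}$ with $\beta,\gamma\le 1$, $\alpha_i\le 2$ satisfies $\beta+\gamma+\sum_i\alpha_i=2n+1$, so either $\beta+\gamma=1$ and every $\alpha_i=2$, or $\beta+\gamma=2$ and exactly one $\alpha_i$ equals $1$. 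In particular the only candidate in (ii) is $x^1y^0v_1^2\cdots v_n^2$, so (ii) is a statement about a single coefficient.

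Next I would reduce an arbitrary $G$ to a triangle or a fundamental subgraph by deleting ears. If $w\notin\{x,y\}$ has degree $2$, with adjacent neighbours $p,q$, then $P(G)=P(G-w)(w-p)(w-q)$, and $w$ occurs only through $(w-p)(w-q)=w^2-(p+q)w+pq$. As $w\neq x,y$, every target monomial above carries $w^2$, so only the term $w^2$ contributes: the coefficient of $x^1y^0\prod v^2$ in $P(G)$ equals that of the corresponding monomial in $P(G-w)$, and any non-vanishing monomial of the second admissible shape in $P(G-w)$ lifts, on multiplying by $w^2$, to one in $P(G)$. Deleting ears distinct from $x,y$ preserves $2$-connectivity and the (restricted) unique proper $3$-colouring, hence the relation between $C(x)$ and $C(y)$. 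Iterating until the only degree-$2$ vertices are $x$ and $y$ leaves a triangle when $xy\in E(G)$ and a fundamental subgraph otherwise, so it suffices to establish (ii) and (iii) in those cases.

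In the fundamental case the coefficient in (ii) is precisely $\eta_1$ of Theorem~\ref{thm:algebraic} (with $z$ now one of the $v_i$). To determine when it vanishes I would strengthen the induction proving Theorem~\ref{thm:algebraic} so that it simultaneously tracks the colours, with the invariant $\eta_1=0\iff C(x)=C(y)$, $\eta_3=0\iff C(x)=C(z)$, and $\eta_2=0\iff C(x)\notin\{C(y),C(z)\}$. The base triangle has $C(x),C(y),C(z)$ pairwise distinct and $\eta_2=0$, as needed. In the inductive step the added degree-$2$ vertex $y$ is adjacent to $\strange{y}$ and $\strange{z}$, so $C(y)$ is the colour distinct from $C(\strange{y})$ and $C(\strange{z})$; checking the three cases of the original proof against the invariant for $G=G'-y$ confirms the invariant for $G'$ (for instance $\strange{\eta}_2=0$ means $C(x)\notin\{C(\strange{y}),C(\strange{z})\}$, i.e.\ $C(x)=C(y)$, in agreement with the conclusion $\eta_1=0$). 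This yields (ii), and (iii) follows at once: exactly one of $\eta_1,\eta_2,\eta_3$ vanishes, so either $\eta_1\neq0$ gives the admissible monomial $x^1y^0z^2\prod v^2$, or $\eta_1=0$ forces $\eta_2\neq0$ and gives $x^1y^1z^1\prod v^2$; by the reduction both lift back to $G$.

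The crux, I expect, is the colour bookkeeping of the third step. One half of (ii) is actually free from the Combinatorial Nullstellensatz: if every proper $3$-colouring forces $C(x)=C(y)$, then the lists $L(y)=\{1\}$, $L(x)=\{2,3\}$, $L(v_i)=\{1,2,3\}$ admit no proper colouring, so the coefficient of $x^1y^0\prod v^2$ must vanish; likewise $C(x)=C(z)$ forces $\eta_3=0$. What this cannot see is the remaining case, where $C(x)$ is the third colour: there a coefficient being zero is invisible to colourability, and I must argue algebraically that it is $\eta_2$ rather than $\eta_1$ or $\eta_3$ that vanishes. Supplying exactly this is the role of the strengthened induction, and getting its case analysis to line up with the colour conditions is the main thing to verify carefully.
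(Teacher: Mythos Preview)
Your proof is correct and runs parallel to the paper's in all essentials: the same edge count for (i), the same strengthened induction on Theorem~\ref{thm:algebraic} tracking which $\eta_j$ vanishes against the colour relation among $x,y,z$ for (ii), and (iii) read off from the fact that at most one $\eta_j$ is zero.

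The one genuine difference is the reduction to the fundamental case. The paper splits $G$ along a separating chord $ab$ into $G_1\ni x,y$ and $G_2$, invokes Theorem~\ref{thm:zhu} to obtain a monomial $a^0b^0\prod v_i^{\alpha_i}$ in $P(G_2-ab)$, and uses $P(G)=P(G_1)P(G_2-ab)$ to transfer the problem to $G_1$; the adjacent case $xy\in E(G)$ is also handled by a direct appeal to Theorem~\ref{thm:zhu}. You instead peel off one degree-$2$ ear $w\notin\{x,y\}$ at a time and observe that only the $w^2$ term of $(w-p)(w-q)$ can contribute to the target monomials, so the relevant coefficients are literally preserved. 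Your route is more self-contained --- it never appeals to Zhu's theorem and stays entirely within the outerplanar setting --- while the paper's chunked reduction is the version that generalises: the same factorisation-through-a-chord manoeuvre with Theorem~\ref{thm:zhu} is reused throughout Section~3 and is what one would want for planar extensions. One small wording point: your termination condition ``the only degree-$2$ vertices are $x$ and $y$'' is literally false in a triangle, where all three vertices have degree $2$; you mean to halt once $|V|=3$ or no degree-$2$ vertex outside $\{x,y\}$ remains, and it is worth saying explicitly that when $xy\in E(G)$ two adjacent degree-$2$ vertices force $|V|=3$ by $2$-connectivity, so the iteration indeed ends at a triangle.
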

\begin{proof}
For the first point, simply note that outerplane near-triangulation on $n+2$ vertices has $2n+1$ edges, while the sum of the exponents of the given monomial is at most $2n$. 

For the second point and for the third one: when $x$ and $y$ are adjacent one may apply Theorem~\ref{thm:zhu} directly; otherwise, by the Hutchinson's shrinking argument it is enough to verify an existence of a suitable monomial for $G$ having exactly 2 vertices of degree 2, when $x$ and $y$ are these vertices.  

Indeed, suppose otherwise and consider any chord (inner edge) $ab$ of $G$ that separates the component $H$ of the graph not containing vertices $x$ and $y$. Such a chord exists, unless $x$ and $y$ are the only degree 2 vertices of $G$. Let $G_1 = G[V(G) \setminus V(H)]$ and $G_2 = G[V(H) \cup \{a, b\}]$. By Theorem~\ref{thm:zhu} $P(G_2 - ab)$ contains non-vanishing monomial of the form $s_2 = \eta a^0 b^0 v_1^{\alpha_1} \dots v_k^{\alpha^k}$ with $\alpha_i \le 2$. 
Note, that common variables in $P(G_1)$ and $P(G_2 - ab)$ are $a$ and $b$ only and that the sum of the exponents in any monomial in $P(G_2 - ab)$ is fixed. Hence, any other monomial in $P(G_2 - ab)$ has different exponents for some of $v_1, \dots v_k$. Therefore, as there is $P(G) = P(G_1) P(G_2 - ab)$, $G$ with $x$ and $y$ satisfies the second (or the third one, respectively) point of the corollary if and only if $G_1$ with $x$ and $y$ does. Actually, the existence of desired monomials $s$ in $P(G)$ and $s_1$ in $P(G_1)$, respectively, is equivalent by identity $s = s_1 s_2$.

Repeating the above argument until there is no separating chord one can shrink $G$ to the claimed form. By Theorem~\ref{thm:algebraic} this finishes the proof of the third point as then one has either $\eta_1 \neq 0$ or $\eta_2 \neq 0$. For the second point it is enough to notice that under the assumption of Theorem~\ref{thm:algebraic} there is $\eta_1 = 0$ if and only if $C(x) = C(y)$. Note that there is also $\eta_3 = 0$ if and only if $C(z) = C(x)$ and then $\eta_2 = 0$ if and only if $x, y$ and $z$ have 3 different colours. One may prove this fact by a simple analysis of the inductive step in the proof of Theorem~\ref{thm:algebraic}. 

Indeed, in the base case (a triangle $xyz$) we have $\eta_2 = 0$. Further, when $G$ is extended to $G'$ by a triangle $\strange{y} \strange{z}y$ then
\begin{enumerate}
\item[1.] $\strange{\eta}_1 = 0$ ($C(\strange{y}) = C(x)$) forces $\eta_3 = 0$ (when $z = \strange{y}$) or $\eta_2 = 0$ (when $z = \strange{z}$),
\item[2.] $\strange{\eta}_2 = 0$ forces $\eta_1 = 0$ ($C(x) = C(y)$),
\item[3.] $\strange{\eta}_3 = 0$ ($C(\strange{z}) = C(x)$) forces $\eta_3 = 0$ (when $z = \strange{z}$) or $\eta_2 = 0$ (when $z = \strange{y}$)     .
\end{enumerate}
\end{proof}

\section{Poly-extendability of general outerplanar graphs}
The results of the previous section can be of course applied to any outerplanar graph, not necessarily triangulated. This, however, leads to loss of information, as usually there is more than one way to triangulate the graph, and different triangulations may lead to different types of extendability. Moreover, in the case of non-triangulated graphs, as well as those that are not 2-connected, the counting argument behind point (i) of Corollary~\ref{cor:Hutch} does not work any more. Hence, it is possible for a general outerplanar graph to be $(1,1)$-extendable.
At first, a formal definition of fundamental subgraphs is provided, followed by three instrumental lemmas.

\begin{definition}
	Let $G$ be a 2-connected outerplane graph, $x,y \in V(G)$ and let $T(G)$ be the weak dual of $G$. The \emph{fundamental $x-y$ subgraph of $G$} is the subgraph of $G$ induced by the vertices belonging to faces that have vertices representing them in $T(G)$ lying on the shortest path between vertices representing faces on which $x$ and $y$ lie. If $xy \in E(G)$, then the fundamental subgraph reduces to an edge $xy$.
\end{definition}

Here, the assumption that the graph is outerplane is needed, as the construction of weak dual requires a particular embedding to be chosen. Notice however that in case of 2-connected outerplanar graphs there is, up to isomorphism, just one outerplane embedding, hence every 2-connected outerplanar graphs has essentially a single weak dual. Therefore in the rest of the paper we will assume the graphs to be outerplanar, as the choice of an embedding is irrelevant for our purpose.

\begin{definition}
	Let $G$ be a connected outerplanar graph with cutvertices, and let $BC(G)$ be the block-cutvertex graph of $G$. Let $x, y \in V(G)$ be vertices lying in two different blocks of $G$. The fundamental $x-y$ subgraph of $G$ consists of all blocks that have vertices representing them in $BC(G)$ lying on the shortest path between vertices representing blocks containing $x$ and $y$, and each of those blocks is restricted to the fundamental $a-b$ subgraph, where $a, b \in V(G)$ are the two cutvertices belonging to the given block and to the shortest path between blocks containing $x$ and $y$ in $BC(G)$.
\end{definition}

\begin{definition}
	An outerplanar graph $G$ with $x, y \in V(G)$ is \emph{$xy$-fundamental} if its fundamental $x-y$ subgraph is equal to $G$. An outerplanar graph $G$ is \emph{fundamental} if it is $xy$-fundamental for some $x,y \in V(G)$.
\end{definition}
\begin{figure}[htb]
\begin{center}
	\includegraphics[width = 0.7\textwidth]{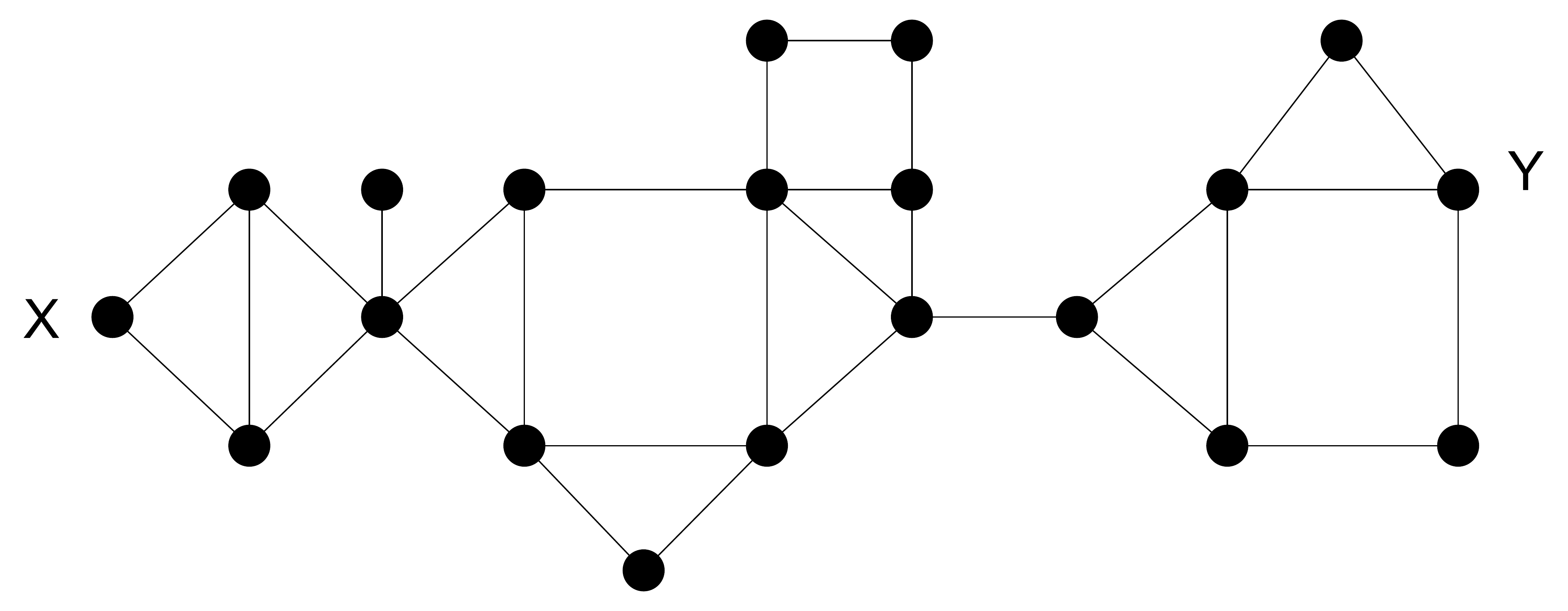}

	\includegraphics[width = 0.7\textwidth]{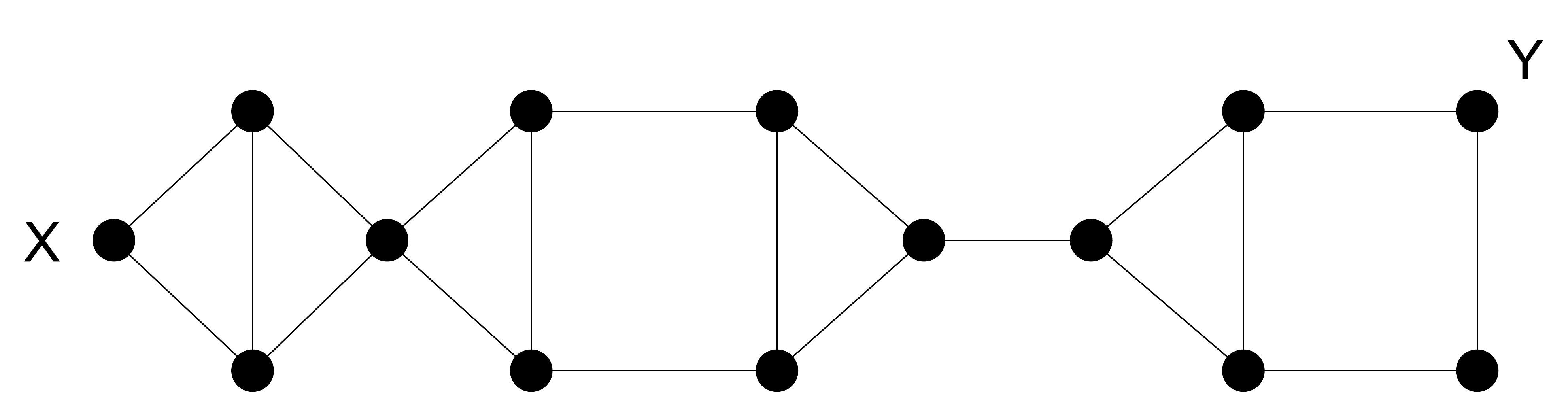}
\end{center}
\caption{Top: a connected, outerplanar graph $G$; Bottom: a fundamental $x-y$ subgraph of $G$.} \label{fig:fundamental}
\end{figure}
\begin{lemma}\label{lem:l1}
	Let $G$ be a 2-connected $xy$-fundamental near-triangulation, such that $C(x)=C(y)$, where $C \colon V(G) \to \{1,2,3\}$ is any proper 3-colouring of $G$. Let $v_0$ be the vertex of $G$ that has degree 2 in $G-y$, and $v_1, \dots, v_n$ be the remaining vertices. Then in $P(G)$ there is a non-vanishing monomial of the form $\eta x^0y^2v_0^1v_1^2 \dots v_n^2$, with $\eta \in \lbrace -1,1 \rbrace$.
\end{lemma}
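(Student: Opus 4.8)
The plan is to deduce the Lemma from a sharper statement that mirrors Theorem~\ref{thm:algebraic}: instead of the single monomial in the conclusion I would track three related ones and, as there, pin down exactly which of their coefficients vanishes in terms of the unique $3$-colouring. First I would fix the combinatorial picture. Since $G$ is $xy$-fundamental its weak dual is a path, so $G$ is a chain of triangles whose two degree-$2$ vertices are its endpoints, and these endpoints are forced to be $x$ and $y$. As $x\sim y$ would give $C(x)\ne C(y)$, the hypothesis $C(x)=C(y)$ makes $x$ and $y$ non-adjacent, so the chain has at least two triangles and the inductive machinery of Theorem~\ref{thm:algebraic} applies. Deleting $y$ leaves a shorter chain $G-y$ whose two degree-$2$ vertices are $x$ and $v_0$ (here $v_0$ is the one different from $x$, as in the statement); I write $w$ for the other neighbour of $y$, so that $yv_0w$ is the terminal triangle and $v_0w\in E(G)$.

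The three coefficients I would follow are those, in $P(G)$, of the monomials obtained from $x^0y^2v_0^2w^2\prod_{\mathrm{rest}}v^2$ (whose degree exceeds $|E(G)|$ by one) by lowering a single exponent among $y,v_0,w$:
$$\alpha^c:\ x^0y^2v_0^1w^2\!\!\prod_{\mathrm{rest}}v^2,\qquad \alpha^d:\ x^0y^2v_0^2w^1\!\!\prod_{\mathrm{rest}}v^2,\qquad \beta:\ x^0y^1v_0^2w^2\!\!\prod_{\mathrm{rest}}v^2.$$
The coefficient $\alpha^c$ is precisely the one in the conclusion, so it suffices to prove $\alpha^c\in\{-1,1\}$. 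The sharper claim, proved by induction on the number of triangles, is that $\{\alpha^c,\alpha^d,\beta\}=\{-1,0,1\}$ (in particular $\alpha^c+\alpha^d+\beta=0$, just as $\eta_1+\eta_2+\eta_3=0$ in Theorem~\ref{thm:algebraic}), together with the dictionary identifying the vanishing member: since $\{C(y),C(v_0),C(w)\}=\{1,2,3\}$, the colour $C(x)$ equals exactly one of them, and $\beta=0$ iff $C(x)=C(y)$, $\alpha^c=0$ iff $C(x)=C(v_0)$, $\alpha^d=0$ iff $C(x)=C(w)$. Granting this, $C(x)=C(y)$ forces $\beta=0$, whence $\alpha^c=\pm1$, which is the Lemma.

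For the base case I would take the diamond $K_4-xy$: a direct expansion gives $\{\alpha^c,\alpha^d\}=\{-1,1\}$ and $\beta=0$, consistent with $C(x)=C(y)$. For the inductive step I would remove the tip $y$ and use $P(G)=P(G-y)(v_0-y)(w-y)$ exactly as in Theorem~\ref{thm:algebraic}. Expanding $(v_0-y)(w-y)=v_0w-(v_0+w)y+y^2$ and comparing powers of $y$, the $y^2$-part yields $\alpha^c(G)=\beta(G-y)$ and $\alpha^d(G)=\alpha^{w}(G-y)$, while the $y^1$-part gives $\beta(G)=-\beta(G-y)-\alpha^{w}(G-y)$; here $\alpha^{w}(G-y)$ is the $\alpha$-coefficient of the chain $G-y$ that singles out the neighbour $w$ of its tip $v_0$. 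Depending on whether $w$ is, or is not, the vertex newly exposed by deleting $v_0$, $\alpha^{w}(G-y)$ equals $\alpha^c(G-y)$ or $\alpha^d(G-y)$ — the two sub-cases corresponding to the case split in the proof of Theorem~\ref{thm:algebraic}. In either sub-case the new triple is a permutation of the triple for $G-y$, which is $\{-1,0,1\}$ by induction, so it is again $\{-1,0,1\}$; and a one-line check that ``$C(x)=C(v_0)$ for $G-y$'' corresponds to ``$C(x)$ equals the colour of the newly exposed neighbour'' propagates the colouring dictionary.

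The main obstacle is bookkeeping rather than any hard computation. One must carry three coefficients instead of one, the essential point being that the $\alpha$-coefficient genuinely depends on which neighbour of the near tip is singled out, so $\alpha^c$ and $\alpha^d$ cannot be merged; and one must keep the two attachment sub-cases and their signs straight while verifying that the ``which coefficient vanishes'' dictionary transforms correctly under the deletion of $y$. This is the same kind of inductive colour analysis already used for $\eta_1,\eta_2,\eta_3$ in Theorem~\ref{thm:algebraic} and Corollary~\ref{cor:Hutch}, so I expect it to close without surprises once the sign conventions are fixed.
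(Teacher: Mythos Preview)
Your inductive scheme is correct: tracking the three coefficients $\alpha^c,\alpha^d,\beta$ and proving $\{\alpha^c,\alpha^d,\beta\}=\{-1,0,1\}$ together with the colouring dictionary does the job, and your recurrences $(\alpha^c(G),\alpha^d(G),\beta(G))=(\beta(G-y),\alpha^{w}(G-y),\alpha^{u}(G-y))$ are right (here $\alpha^{u}(G-y)=-\beta(G-y)-\alpha^{w}(G-y)$ because the triple sums to zero, and $C(u)=C(y)$ since $uv_0w$ and $yv_0w$ are both triangles). So the argument closes.

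The paper, however, gets there in three lines by reusing what has already been proved rather than running a parallel induction. Since $C(x)=C(y)$ and $v_0y\in E(G)$, one has $C(x)\neq C(v_0)$; Corollary~\ref{cor:Hutch}(ii) applied to $G-y$ (with $x$ and $v_0$ in the roles of the two special vertices) then gives a non-vanishing monomial $\eta\,x^{0}v_0^{1}v_1^{2}\cdots v_n^{2}$ in $P(G-y)$, with $\eta=\pm1$ forced by degree counting. Multiplying by $(y-v_0)(y-v_n)=y^{2}-yv_0-yv_n+v_0v_n$ produces $\eta\,x^{0}y^{2}v_0^{1}v_1^{2}\cdots v_n^{2}$, and this is the \emph{only} way a $y^{2}$-term can arise, so the monomial survives. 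That is the whole proof.

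The difference is one of economy. Your approach yields a little more---you actually determine all three of $\alpha^c,\alpha^d,\beta$ and tie each zero to a colour coincidence---at the price of redoing, in a mirrored form, the induction already carried out for Theorem~\ref{thm:algebraic} and the colour analysis already carried out for Corollary~\ref{cor:Hutch}. The paper instead exploits that $P(G-y)$ contains no $y$, so the $y^{2}$ factor isolates a single contribution and the lemma drops out of the existing machinery without any new induction.
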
	
\begin{proof}
	As $C(x)=C(y)$, then $C(x) \neq C(v_0)$. Hence by the second case of Corollary~\ref{cor:Hutch}, there is a non-vanishing monomial $\eta x^0v_0^1v_1^2 \dots v_n^2$, with $\eta \in \lbrace -1,1 \rbrace$ in $P(G-y)$. Adding $y$ back, thus multiplying $P(G-y)$ by $(y-v_0)(y-v_n)=y^2-yv_0-yv_n+v_0v_n$, we get the monomial specified in thesis, and as it is the only way to obtain it, it is non-vanishing.
\end{proof}

\begin{lemma}\label{lem:l2}
	Let $G,G'$ be any two graphs, such that $V(G) = \lbrace x, v_1, \dots, v_n \rbrace $, $V(G') = \lbrace x', u_1, \dots, u_m \rbrace$. Let $G''$ be the graph obtained from $G$ and $G'$ by identifying $x$ with $x'$, thus creating vertex $x''$, and carrying neighbouring relations from $G, G'$. Suppose there are non-vanishing monomials $\eta x^\alpha \Pi v_i^{\alpha_i}$ and $\eta' x'^\beta \Pi u_j^{\beta_j}$ in $P(G)$ and $P(G')$ respectively. Then in $P(G'')$ there is a non-vanishing monomial $A(G'') = \eta \eta' x''^{\alpha + \beta}  \Pi v_i^{\alpha_i} \Pi u_j^{\beta_j}$.
\end{lemma}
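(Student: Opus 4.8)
The plan is to exploit the fact that the graph polynomial factorises under vertex identification whenever the two graphs share no edges. Since $G''$ is obtained from $G$ and $G'$ by gluing them at a single vertex, the edge set $E(G'')$ is the disjoint union of a copy of $E(G)$ (with $x$ relabelled as $x''$) and a copy of $E(G')$ (with $x'$ relabelled as $x''$). As the graph polynomial is a product over edges, this immediately yields
$$P(G'') = \pm\, P(G)\big|_{x\to x''}\cdot P(G')\big|_{x'\to x''},$$
where the sign depends only on how the chosen orientation of $G''$ compares with those of $G$ and $G'$. Because, as noted in the introduction, the orientation affects only the sign of the polynomial and not which monomials appear, this sign is irrelevant for a non-vanishing statement; I would simply fix orientations of $G$ and $G'$ and orient $G''$ accordingly so that the sign is $+$.

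First I would record that each graph polynomial is homogeneous: every factor $(u-v)$ has degree $1$, so $P(G)$ is homogeneous of degree $|E(G)|$ and likewise $P(G')$ of degree $|E(G')|$. This is the key structural fact. It means that within $P(G)$ the exponents of $v_1,\dots,v_n$ determine the exponent of $x$ uniquely, via $\alpha = |E(G)| - \sum_i \alpha_i$; symmetrically the exponents of $u_1,\dots,u_m$ determine the exponent of $x'$ in $P(G')$.

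Next I would compute the coefficient of the target monomial $M = (x'')^{\alpha+\beta}\prod_i v_i^{\alpha_i}\prod_j u_j^{\beta_j}$ in the product. Since $P(G)|_{x\to x''}$ involves only the variables $x'', v_1,\dots,v_n$ while $P(G')|_{x'\to x''}$ involves only $x'', u_1,\dots,u_m$, any contribution to $M$ must pair a monomial of $P(G)$ carrying exactly the exponents $\alpha_i$ on the $v_i$ with a monomial of $P(G')$ carrying exactly the exponents $\beta_j$ on the $u_j$. By homogeneity these two monomials are forced to be precisely the given ones, $\eta\,(x'')^\alpha \prod_i v_i^{\alpha_i}$ and $\eta'\,(x'')^\beta \prod_j u_j^{\beta_j}$, and in particular their $x''$-exponents add up to $\alpha+\beta$ automatically. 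Hence $M$ receives a single contribution, with coefficient $\eta\eta' \neq 0$, so it is a non-vanishing monomial of $P(G'')$.

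The only point requiring care --- and the nominal main obstacle --- is ruling out cancellation: a priori the coefficient of $M$ in the product could be a sum over several pairs $(a,b)$ with $a+b = \alpha+\beta$ of cross-terms that might annihilate one another. The homogeneity observation dispatches this completely, since fixing the $v_i$- and $u_j$-exponents pins down $a=\alpha$ and $b=\beta$ and leaves exactly one pair, so no cancellation can occur. This is what makes the simple monomial identity $A(G'') = \eta\eta'\, (x'')^{\alpha+\beta}\prod_i v_i^{\alpha_i}\prod_j u_j^{\beta_j}$ hold verbatim rather than merely up to lower-order corrections.
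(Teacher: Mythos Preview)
Your proof is correct and follows essentially the same approach as the paper's: both rely on the factorisation $P(G'') = P(G)\cdot P(G')$ and then use homogeneity (the paper's phrasing is ``the sum of exponents in every monomial in a polynomial of given graph is fixed'') to conclude that the only pair of monomials contributing to the target has $x$-exponents $\alpha$ and $\beta$, so no cancellation occurs. The paper casts this as a short contradiction argument while you state it directly, but the content is the same.
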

\begin{proof}
	As both $\eta$ and $\eta'$ are non-zero, then the only way $A(G'')$ would vanish is that there were a monomial $A'(G'') = \nu \nu' x''^{\alpha' + \beta'}  \Pi v_i^{\alpha_i} \Pi u_j^{\beta_j}$, where $\nu \nu' = -\eta \eta'$ and $\alpha' + \beta' = \alpha + \beta$. But then in $P(G)$ and $P(G')$ there would have to be respective non-vanishing monomials $\nu x^{\alpha'} \Pi v_i^{\alpha_i}$ and $\nu' x'^{\beta'} \Pi u_j^{\beta_j}$, and as the sum of exponents in every monomial in a polynomial of given graph is fixed, we have that $\alpha = \alpha'$ and $\beta = \beta'$, a contradiction. Thus $A(G'')$ is non-vanishing.
\end{proof}
\begin{figure}[htb]
\begin{center}
	\begin{minipage}{0.45\linewidth}
	\includegraphics[width = 0.9\textwidth]{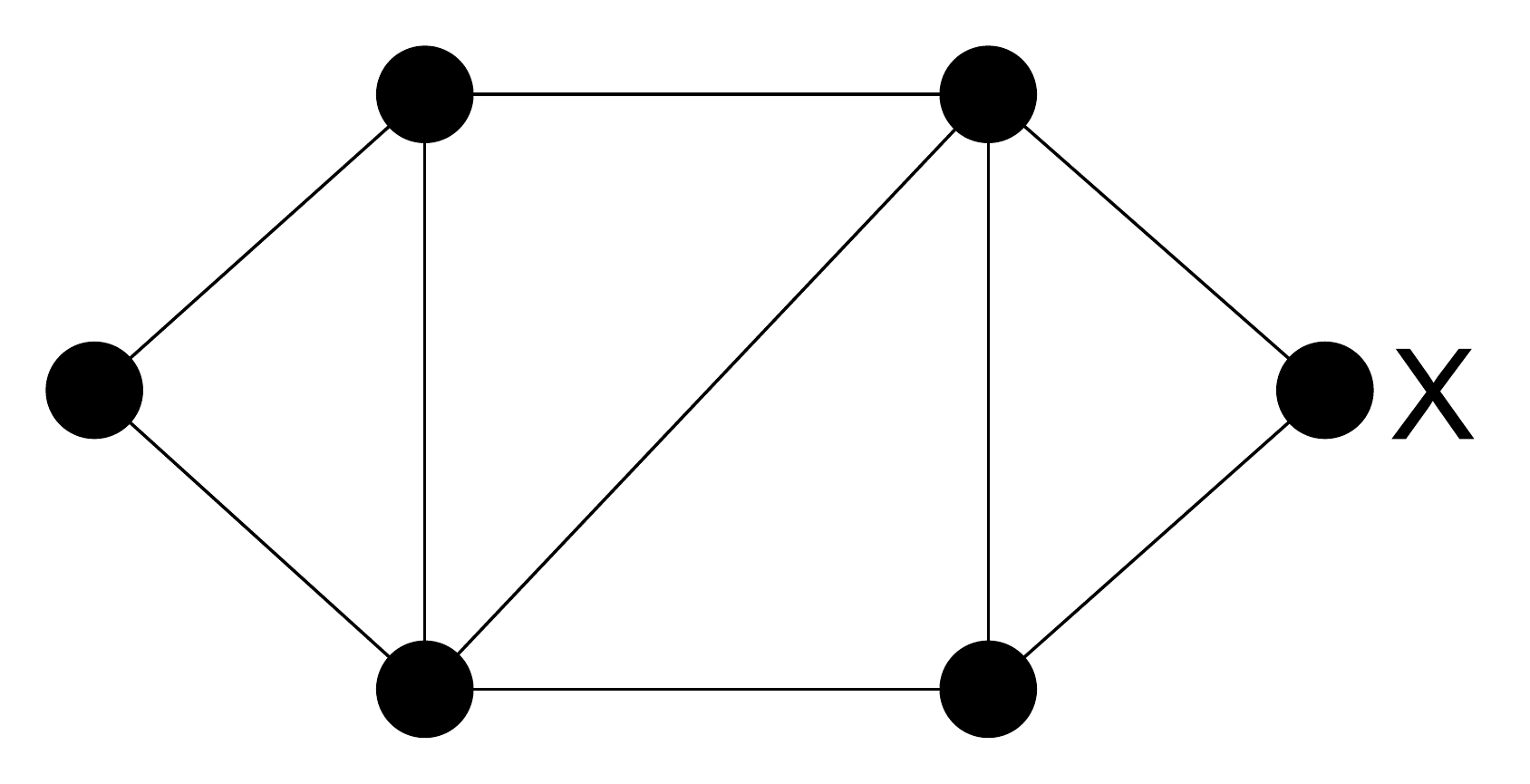}
	\end{minipage}
	\begin{minipage}{0.45\linewidth}
	\includegraphics[width = 0.9\textwidth]{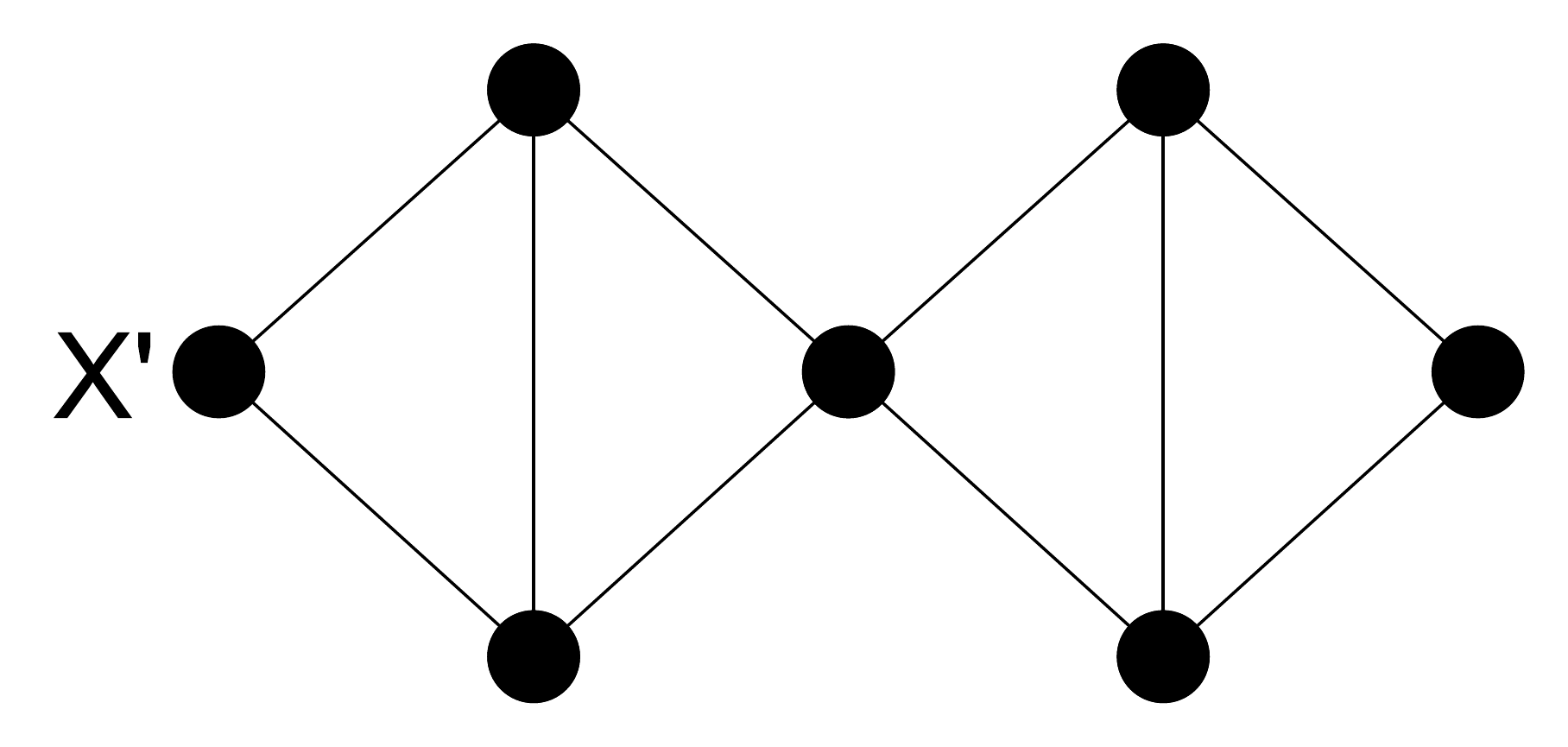}
	\end{minipage}
\end{center}
\begin{center}
	\includegraphics[width = 0.7\textwidth]{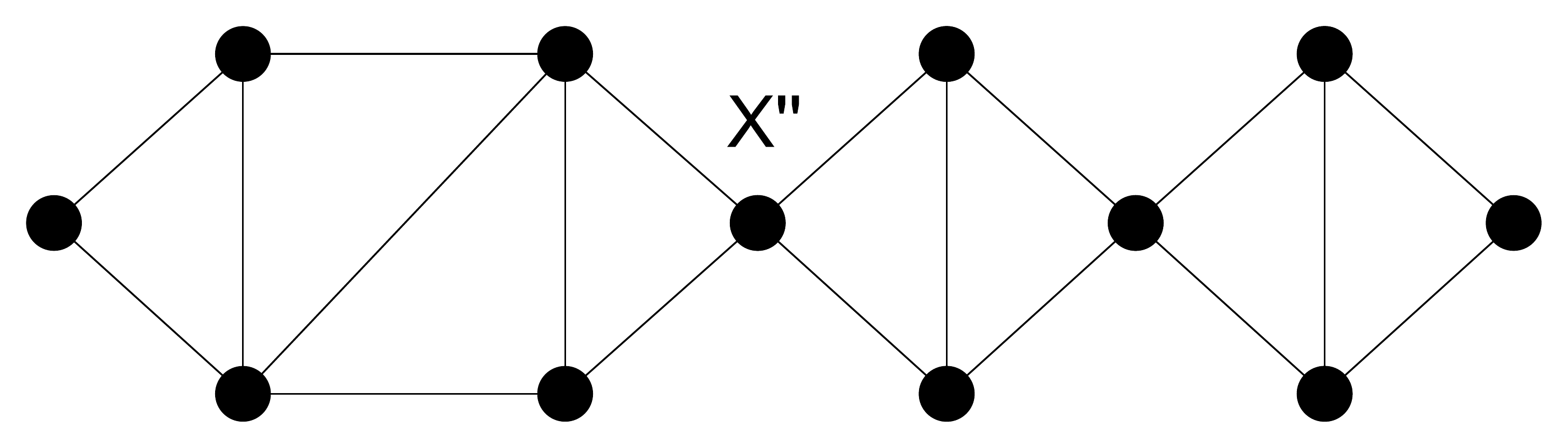}
\end{center}
\caption{Illustration for lemma~\ref{lem:l2}. Top: graphs $G$ (left) and $G'$ (right); Bottom: graph $G''$.}
\label{fig:lem12}
\end{figure}
\begin{lemma}\label{le:path}
	Let $G$ be a path of length $n$, $n \ge 2$, where $x, y$ are the endpoints and $v_1, \dots, v_{n-1}$ are the internal vertices of $G$. Then in $P(G)$ there is a non-vanishing monomial of the form $\eta x^0y^0v_1^2 v_2^1 \dots v_{n-1}^1$, where $\eta \in \lbrace -1,1 \rbrace$.
\end{lemma}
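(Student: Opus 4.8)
The plan is to argue directly from the definition of the graph polynomial, without induction, by counting in how many ways the target monomial can arise. First I would relabel the vertices along the path in order, writing $x = w_0$, $v_i = w_i$ for $1 \le i \le n-1$, and $y = w_n$, so that the edge set is $\{w_i w_{i+1} : 0 \le i \le n-1\}$. Up to the global sign fixed by the chosen orientation, this gives
$$P(G) = \pm \prod_{i=0}^{n-1}(w_i - w_{i+1}).$$
Expanding this product, every monomial is obtained by selecting from each factor $(w_i - w_{i+1})$ either the term $w_i$ or the term $-w_{i+1}$; equivalently, each edge is \emph{assigned} to one of its two endpoints, the exponent of a vertex being the number of edges assigned to it, and the sign being $(-1)^r$ where $r$ is the number of factors from which the second term was chosen.

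Next I would show that the target exponent vector $(w_0,\dots,w_n) \mapsto (0,2,1,\dots,1,0)$ is realised by exactly one such assignment. Reading the constraints from the endpoints inward: the exponent $0$ at $w_0$ forces the edge $w_0w_1$ to be assigned to $w_1$; then the exponent $2$ at $w_1$ forces the edge $w_1w_2$ to be assigned to $w_1$ as well; and for each $i$ with $2 \le i \le n-1$, once $w_{i-1}w_i$ is known to be assigned to $w_{i-1}$, the exponent $1$ at $w_i$ forces $w_iw_{i+1}$ to be assigned to $w_i$. The exponent $0$ at $w_n$ is then automatically consistent, since $w_{n-1}w_n$ has been assigned to $w_{n-1}$. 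Thus the assignment is uniquely determined at every step, so the target monomial occurs exactly once in the expansion; its coefficient is therefore $\pm 1$, and in particular non-vanishing, with $\eta \in \{-1,1\}$ as claimed.

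Since the forcing argument is completely deterministic, there is no real obstacle here beyond bookkeeping; the only point to watch is the degenerate base case $n = 2$, where there are no internal vertices beyond $v_1$, and one should simply check that the coefficient of $v_1^2$ in $(x - v_1)(v_1 - y) = xv_1 - xy - v_1^2 + v_1 y$ equals $-1$. The same statement could alternatively be proved by induction on $n$ in the style of Lemma~\ref{lem:l1}: delete the endpoint $y$, invoke the claim for the length-$(n-1)$ path with endpoints $x$ and $v_{n-1}$ to obtain a non-vanishing monomial $\eta\, x^0 v_{n-1}^0 v_1^2 v_2^1 \cdots v_{n-2}^1$, and then multiply by the reinstated edge factor $(v_{n-1} - y)$; selecting the $v_{n-1}$ term is the unique way to keep the exponent of $y$ equal to $0$, which raises $v_{n-1}$ to exponent $1$ and reproduces the desired monomial, its uniqueness guaranteeing non-vanishing. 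I would nonetheless prefer the direct counting argument, since it makes the uniqueness transparent in a single pass.
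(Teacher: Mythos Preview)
Your proof is correct. Your primary argument differs from the paper's: the paper proceeds by induction on $n$, checking the base case $n=2$ explicitly and then, for the step, deleting the last vertex and multiplying the inductively obtained monomial by the single new edge factor --- exactly the alternative you sketch in your final paragraph. Your preferred route instead expands the whole product at once and reads off the unique edge-to-endpoint assignment compatible with the exponent vector $(0,2,1,\dots,1,0)$, which makes the coefficient $\pm 1$ visible in one pass and explains \emph{why} the monomial survives rather than just verifying that it does. The inductive proof has the virtue of matching the style used elsewhere in the paper (notably in Lemma~\ref{lem:l1} and Theorem~\ref{thm:algebraic}); your direct counting is slightly more self-contained and would generalise more readily to other target exponent vectors on the path. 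Either way, the content is the same uniqueness observation, and there is nothing to fix.
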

\begin{proof}
	Suppose at first that $n=2$. Then $P(G)=(x-v_1)(y-v_1)=xy-xv_1-yv_1+v_1^2$, and the last monomial is the one fulfilling the thesis. Now suppose that the lemma holds for $n=k-1$. Then in $P(G)$, where $G$ is a path $x v_1 \dots v_{k-1}$, there is a monomial $\eta x^0v_1^2 v_2^1 \dots v_{k-2}^1v_{k-1}^0$. Now adjoining $v_k$ to $v_{k-1}$, thus multiplying $P(G)$ by $(v_{k-1}-v_k)$ we obtain a monomial $\eta x^0v_1^2 v_2^1 \dots v_{k-2}^1v_{k-1}^1v_k^0$ for path of length $k$, hence completing the induction.
\end{proof}

\vspace{1em}

\subsection{Near-triangulations with cutvertices}~\\

The following theorem is a polynomial analogue of \cite[Theorem 5.3]{Hutchinson} that characterizes extendability of outerplanar near-triangulations with cutvertices. 

\begin{theorem}\label{thm:cutvertices}
	Let $G$ be a fundamental $x-y$ subgraph with cutvertices $\lbrace v_1, \dots, v_{j-1} \rbrace$, $CV(G)= (x, v_1, \dots, v_{j-1}, y)$ be the sequence consisting of $x$, $y$ and the cutvertices of $G$ in order that they occur on any of the paths from $x$ to $y$, and $u_{i,k}$ being the remaining vertices in the $i$-th block. Then in $P(G)$:
	\begin{enumerate}
		\item there is a non-vanishing monomial of the form $\eta_1 x^1 y^1 \Pi v_m^{\alpha_m} \Pi u_{i,k}^{\beta_{i,k}}, \alpha_m, \beta_{i,k} \le 2$ if every vertex from $CV(G)$ is in the same colour class; 
		\item there is a non-vanishing monomial of the form $\eta_2 x^0 y^1 \Pi v_m^{\alpha_m} \Pi u_{i,k}^{\beta_{i,k}}, \alpha_m, \beta_{i,k} \le 2$ if there is a single pair of successive vertices in $CV(G)$ that are in different colour classes;
		\item there is a non-vanishing monomial of the form $\eta_3 x^0 y^0 \Pi v_m^{\alpha_m} \Pi u_{i,k}^{\beta_{i,k}}, \alpha_m, \beta_{i,k} \le 2$ if there are at least two pairs of successive vertices in $CV(G)$ that are in different colour classes;
	\end{enumerate}
\end{theorem}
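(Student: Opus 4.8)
The plan is to decompose $G$ along its block-cutvertex structure and build the desired monomial block by block, using Lemma~\ref{lem:l2} as the gluing mechanism at each cutvertex. The sequence $CV(G) = (x, v_1, \dots, v_{j-1}, y)$ partitions $G$ into consecutive $2$-connected blocks $B_1, \dots, B_j$, where $B_i$ has $v_{i-1}$ and $v_i$ as its two distinguished vertices (writing $v_0 = x$, $v_j = y$), and $B_i$ is itself a fundamental $v_{i-1}$--$v_i$ subgraph to which Corollary~\ref{cor:Hutch} applies. First I would record, for each block $B_i$, which of the three cases of Corollary~\ref{cor:Hutch} supplies a monomial: if the two cutvertices $v_{i-1}, v_i$ of $B_i$ lie in the same colour class, then by point (i) there is \emph{no} monomial with both their exponents zero, but point (iii) still guarantees a non-vanishing monomial with exponents $(\beta, \gamma)$, $\beta, \gamma \le 1$, and in fact when they are in the same class one can take both exponents equal to $1$; if they lie in different colour classes, point (ii) supplies a monomial in which exactly one of the two exponents is $0$ and the other is $1$, and one can choose which endpoint carries the $0$.

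The key step is then to chain these blocks together at the shared cutvertices using Lemma~\ref{lem:l2}. At each internal cutvertex $v_i$ the two blocks $B_i$ and $B_{i+1}$ are identified along $v_i$, and Lemma~\ref{lem:l2} tells us that the exponent of $v_i$ in the glued monomial is the \emph{sum} of the two contributing exponents; moreover, the glued monomial is guaranteed non-vanishing. The strategy is therefore to orient the choices at each block so that the exponent of every internal cutvertex $v_i$ ends up at most $2$. This is where the colour-class data drives everything: for a run of consecutive cutvertices all in the same colour class, each carries exponent $1$ from each adjacent block, giving $2$ in total, which is admissible; at a boundary between colour classes we use the freedom in point (ii) to place the $0$ on the appropriate side so that the exponent of that shared cutvertex drops to $1$ and the terminal endpoints $x, y$ acquire the exponents demanded by the three cases.

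Concretely, in case (i) every pair of successive vertices in $CV(G)$ is in the same class, so every block contributes exponent $1$ to each of its two cutvertices; the two end vertices $x, y$ each receive exponent $1$ from their single incident block, giving $x^1 y^1$, while each internal $v_i$ receives $1+1 = 2$, as required. In case (ii) there is exactly one colour-class boundary, say between $v_{r}$ and $v_{r+1}$; I would use point (ii) of the corollary on the block straddling that boundary to absorb a $0$, propagate the choice so that $x$ ends with exponent $0$ and $y$ with exponent $1$ (re-indexing if needed, since the statement is symmetric in $x,y$ up to the labelling), and check that every internal cutvertex still sums to at most $2$. Case (iii), with at least two boundaries, has enough ``slack'' that both $x$ and $y$ can be made to carry exponent $0$.

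The main obstacle I expect is bookkeeping the exponents of the cutvertices so that none exceeds $2$ while simultaneously forcing the prescribed exponents $(\beta,\gamma)$ on $x$ and $y$; the delicate point is that at a same-class cutvertex the forced exponent is exactly $2$ with no room to spare, so one must verify that the only situation producing an exponent $3$ — a same-class cutvertex flanked on both sides by blocks each insisting on exponent $1$ — never needs to be avoided because $2$ is already the admissible maximum, whereas at every class-boundary cutvertex the available $0$ keeps the sum at $1$. A careful induction on $j$ (the number of blocks), carrying as the inductive hypothesis the full case-split of Corollary~\ref{cor:Hutch} on the partially assembled graph, makes this precise and lets Lemma~\ref{lem:l2} discharge the non-vanishing claim at each gluing.
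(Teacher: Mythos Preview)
Your decomposition into blocks and the gluing mechanism via Lemma~\ref{lem:l2} are exactly right, and case~(i) works as you describe. The gap is in cases~(ii) and~(iii): the ``propagation'' you sketch cannot be carried out using only the monomials supplied by Corollary~\ref{cor:Hutch}. When a block $B_i$ has its two endpoints in the \emph{same} colour class, Corollary~\ref{cor:Hutch} forces both endpoint exponents to equal~$1$ (as you yourself observe), and there is no choice to make. Consequently, if the block $B_1$ incident to $x$ has same-class endpoints, then $x$ is forced to exponent~$1$ regardless of what you do elsewhere. Concretely, take three blocks with the unique colour-class boundary in the middle block $B_2$: your scheme gives $x^1 v_1^{1+a} v_2^{b+1} y^1$ with $\{a,b\}=\{0,1\}$, hence $x^1 y^1$ in every case, never $x^0 y^1$ nor $x^1 y^0$. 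The same obstruction blocks case~(iii) whenever same-class blocks flank both $x$ and $y$.

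The missing ingredient is Lemma~\ref{lem:l1}: for a same-class block it supplies a non-vanishing monomial with endpoint exponents $(0,2)$ rather than $(1,1)$. This is precisely what lets the zero exponent reach $x$. In the paper's proof of case~(ii), one applies Lemma~\ref{lem:l1} to every block $B_1,\dots,B_{i-1}$ preceding the boundary block $B_i$, obtaining $x^0$ and $v_{i-1}^2$; then $B_i$ contributes $v_{i-1}^0 v_i^1$, so the glued exponent of $v_{i-1}$ is $2+0=2$, and the remaining same-class blocks $B_{i+1},\dots,B_j$ use the $(1,1)$ monomial as in your plan. Case~(iii) is handled the same way on both sides of the two chosen boundary blocks. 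Your induction could be repaired by strengthening the hypothesis to also track a monomial with endpoint pattern $(0,2)$ on each same-class prefix, but that amounts to reproving Lemma~\ref{lem:l1} inside the induction.
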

\begin{figure}[htb]
\begin{center}
	\includegraphics[width = 0.9\textwidth]{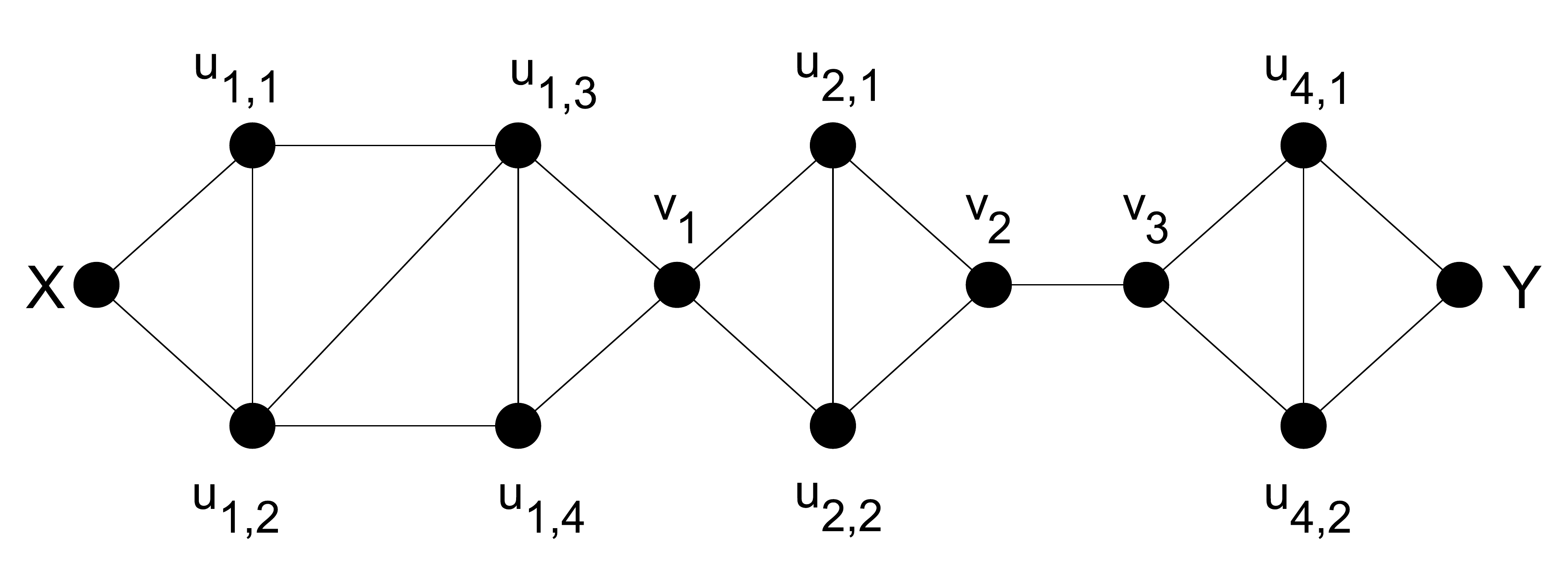}
\end{center}
\caption{An example of labelling described in Theorem~\ref{thm:cutvertices}.}
\label{fig:cutvertices}
\end{figure}
\begin{proof}
	Start with partitioning $G$ by its cutvertices into separate, 2-connected, $v_{i-1}v_i$-fundamental outerplanar near-triangulations $B_1, \dots, B_j$. To each of these graphs, Theorem~\ref{thm:algebraic} applies, and $P(G)=P(B_1) \dots P(B_j)$. If in each of those blocks the colour class of degree 2 vertices is the same, then in each of their polynomials there is a non-vanishing monomial such that exponents of degree 2 vertices are equal to 1, with other exponents no larger than 2. Thus case 1 is just a repeated use of lemma \ref{lem:l2}.
	
	In the second case, let $B_i$ be the block with degree 2 vertices in different colour classes. If $i=1$, then in $P(B_1)$ there is a non-vanishing monomial of the form $\eta_0x^0v_1^1\Pi u^2_{1,k}$. Hence again by lemma \ref{lem:l2} we get the desired monomial. If $i>1$, then we apply lemma \ref{lem:l1} to each block $B_1$ to $B_{i-1}$, thus by lemma \ref{lem:l2} obtaining monomial with $x^0$ and $v_{i-1}^2$. As $v_{i-1}$ and $v_i$ are in different colour classes, $P(B_i)$ contains a non-vanishing monomial $\eta_i v_{i-1}^0 v_i^1 \Pi u^2_{i,k}$, hence through lemma \ref{lem:l2} we finish the case.
	
	The last case is starts analogously to the second one, with $B_i, B_l, i<l$ being two blocks with endpoints in different colour classes. Let $G'$ be the $v_{i-1}v_l$-fundamental subgraph of $G$. By Theorem~\ref{thm:algebraic} there is a non-vanishing monomial in $P(B_i)$ with $v_{i-1}^0$ and $v_i^1$ and a monomial in $P(B_l)$ with $v_{l-1}^1$ and $v_l^0$. As every block between $B_i$ and $B_l$ has a monomial with endpoints in power 1, by lemmas \ref{lem:l1} and \ref{lem:l2} there is a monomial in $P(G')$ with both $v_{i-1}$ and $v_l$ in power 0. Again by lemmas \ref{lem:l1} and \ref{lem:l2} we can now adjoin remaining parts of $G$ to $G'$, with their suitable monomials creating a desired monomial in $P(G)$.
\end{proof}
~\\

\subsection{2-connected outerplanar graphs with non-triangular faces}~\\

The following three theorems are jointly analogous to \cite[Theorem 4.3]{Hutchinson}.

\begin{theorem}\label{thm:nontri0}
	Let $G$ be a 2-connected $xy$-fundamental graph with exactly one non-triangular interior face, and that face contains $x$ and does not contain $y$. Let $V(G) = \lbrace x, y, a, b, v_1, \dots, v_n \rbrace$, where $a, b$ are the two vertices of non-triangular face belonging to the neighbouring interior face. Let $C(v)$ be the colour class of vertex $v$ in the 3-colouring of the graph induced by all of the triangular faces. Then in $P(G)$:
	\begin{enumerate}
	\item there is a non-vanishing monomial of the form $\eta_1 x^0 y^1 a^{\alpha_a} b^{\alpha_b} \Pi v_i^{\alpha_i}, \alpha_k \le 2$ if $d(x,a)=1$ and $C(a)=C(y)$ OR $d(x,b)=1$ and $C(b)=C(y)$;
	\item there is a non-vanishing monomial of the form $\eta_2 x^0 y^0 a^{\alpha_a} b^{\alpha_b} \Pi v_i^{\alpha_i}, \alpha_k \le 2$ otherwise.
	\end{enumerate}
\end{theorem}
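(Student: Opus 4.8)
The plan is to peel the single non-triangular face off as a path and to glue it back using the factorisation of the graph polynomial, always arranging the exponent of $x$ to be $0$.

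Write $H$ for the graph obtained from $G$ by deleting the degree-$2$ interior vertices of the non-triangular face. Since $G$ is $2$-connected and $xy$-fundamental with exactly one non-triangular face, $H$ is a $2$-connected outerplane near-triangulation containing $y$, and $ab$ is a boundary edge of $H$. The rest of the boundary of the non-triangular face is a path $P = a\,p_1\cdots p_{t-1}\,b$ through $x$, say $x = p_s$, with $\ell_a := d(x,a) = s$ and $\ell_b := d(x,b) = t-s$ and $\ell_a + \ell_b \ge 3$ (the face is not a triangle). Then $P(G) = P(H)\cdot P(P)$, the two factors sharing only the variables $a$ and $b$.

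The hard part is exactly that $H$ and the face meet in the \emph{two} vertices $a$ and $b$: Lemma~\ref{lem:l2} only glues along one vertex, and over two common vertices a product of non-vanishing monomials can cancel. I would bypass this by passing to the $x^0$-part. Splitting $P$ at $x$ into $P_L = a\cdots x$ and $P_R = x\cdots b$ and using that $x$ occurs only in the path factors gives $[P(G)]_{x^0} = P(H)\cdot[P(P_L)]_{x^0}\cdot[P(P_R)]_{x^0}$, three homogeneous polynomials sharing only $a$ (with $P(H)$) and $b$ (with $P(H)$), and nothing between the two path factors. This is a tree-like, single-vertex gluing to which the homogeneity argument of Lemma~\ref{lem:l2} applies, so any product of one non-vanishing monomial from each factor is non-vanishing in $P(G)$. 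By Lemma~\ref{le:path}, $[P(P_L)]_{x^0}$ has a non-vanishing monomial with all exponents $\le 2$ and exponent $\epsilon_a$ on $a$, where $\epsilon_a = 1$ is forced if $\ell_a = 1$ (then $P_L = ax$ and its $x^0$-part is $a$) and may be taken $0$ if $\ell_a \ge 2$; symmetrically for $\epsilon_b$. It thus remains to produce, in $P(H)$, a non-vanishing monomial $y^\delta a^A b^B(\cdots)$ with all other exponents $\le 2$, with $A \le 2-\epsilon_a$, $B \le 2-\epsilon_b$, and $\delta = 1$ in case (i), $\delta = 0$ in case (ii).

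For the $P(H)$-factor I would use Corollary~\ref{cor:Hutch}, which holds for an arbitrary pair of vertices of $H$: for $u,y \in V(H)$ the monomial $u^1 y^0(\cdots)$ (all other exponents $\le 2$) is non-vanishing iff $C(u)\ne C(y)$ by point (ii), while if $C(u)=C(y)$ then points (i)--(ii) exclude $u^0y^0$, $u^1y^0$, $u^0y^1$ and point (iii) forces $u^1y^1(\cdots)$ to be non-vanishing. If $d(x,a)=1$ (so $\epsilon_a=1$, $\epsilon_b=0$) apply this to $(a,y)$: $C(a)=C(y)$ gives $y^1 a^1(\cdots)$ (case (i)) and $C(a)\ne C(y)$ gives $y^0 a^1(\cdots)$ (case (ii)), both with $A=1$; the case $d(x,b)=1$ is symmetric via $(b,y)$. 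If instead $d(x,a),d(x,b)\ge 2$, then both clauses of case (i) fail, so we are in case (ii); here $C(a)\ne C(b)$ because $ab\in E(H)$, so one of $a,b$ — say $a$ — has $C(a)\ne C(y)$, and $(a,y)$ yields $y^0 a^1(\cdots)$ with $b$ of exponent $\le 2$. Multiplying the selected $H$-monomial by the two path monomials produces a non-vanishing monomial of $P(G)$ of the required shape: exponent $0$ on $x$, the prescribed exponent $\delta$ on $y$, and every remaining exponent (on $a$, $b$, the face-interior $v_i$ via Lemma~\ref{le:path}, and the $H$-interior $v_i$ via Corollary~\ref{cor:Hutch}) at most $2$. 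The two points to verify carefully are the legitimacy of the $x^0$-decoupling (which turns the two-vertex gluing into two one-vertex gluings) and that the trichotomy above matches the statement, the colour inequality $C(a)\ne C(b)$ being exactly what supplies a usable vertex in the last case.
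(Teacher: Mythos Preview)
Your proof is correct and follows essentially the same decomposition as the paper: delete the face-interior vertices to obtain the near-triangulation $H$ (the paper's $G'$), invoke Corollary~\ref{cor:Hutch} on $H$ with the pair $(a,y)$ or $(b,y)$, and then re-attach the boundary path via Lemma~\ref{le:path}. Your explicit factorisation $[P(G)]_{x^0} = P(H)\cdot[P(P_L)]_{x^0}\cdot[P(P_R)]_{x^0}$ is a cleaner way to reduce the two-vertex gluing at $\{a,b\}$ to two one-vertex gluings (and hence to the homogeneity argument of Lemma~\ref{lem:l2}) than the paper's incremental edge-then-path construction, which achieves the same effect but leaves the non-vanishing check implicit.
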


\begin{figure}[htb]
\begin{center}
	\begin{minipage}{0.49\linewidth}
		\includegraphics[width = 0.99\textwidth]{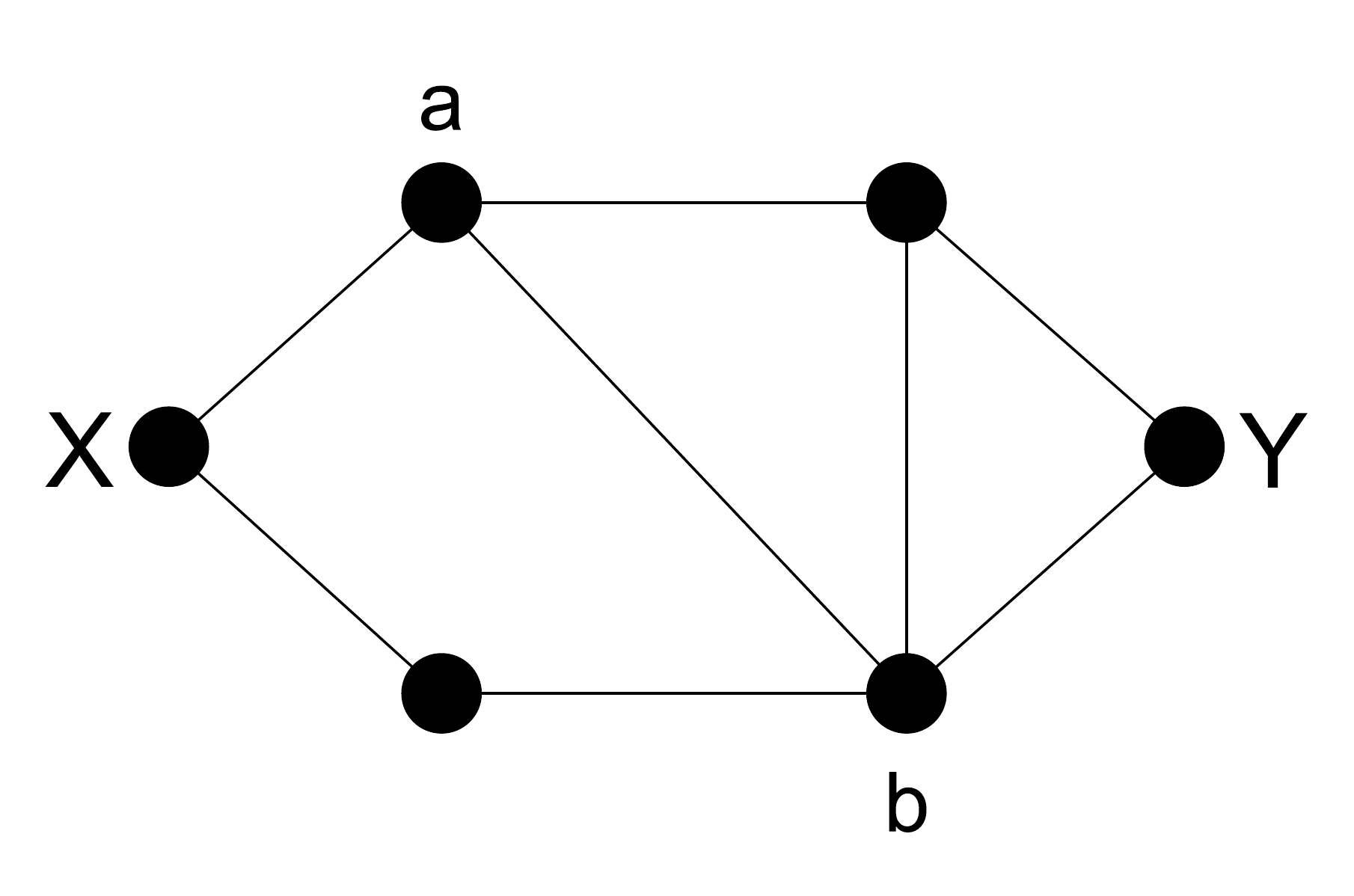}
	\end{minipage}
	\begin{minipage}{0.49\linewidth}
		\includegraphics[width = 0.99\textwidth]{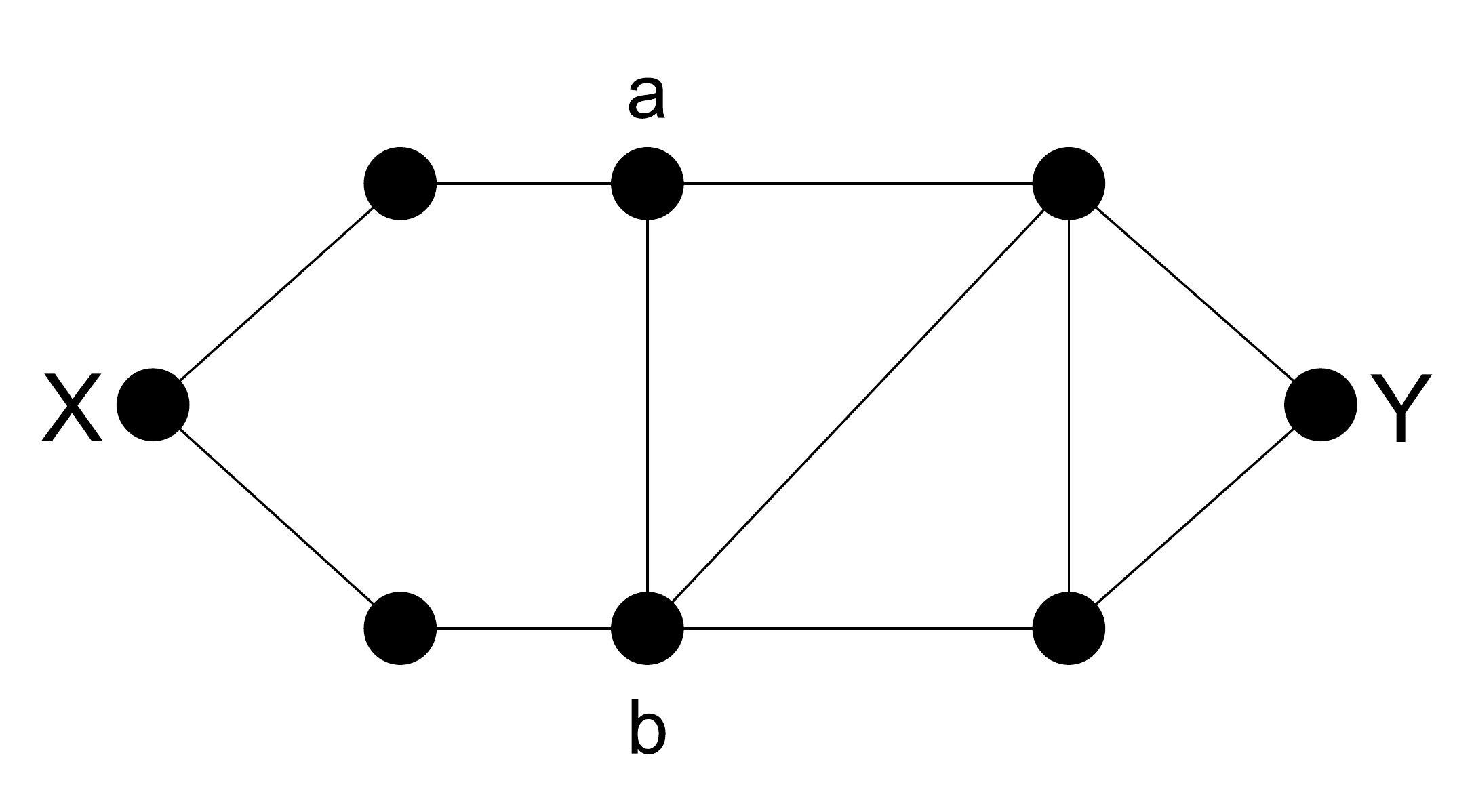}
	\end{minipage}
\end{center}
\caption{Examples of labelling as in Theorem~\ref{thm:nontri0}. Left: example to point (i); Right: example to point (ii).} \label{fig:nontri0}
\end{figure}
\begin{proof}
	Suppose that $d(x,a)=1$ and $C(a)=C(y)$. Let $G'$ be the subgraph of $G$ created by deleting all the vertices on the non-triangular face except for $a$ and $b$. As $G'$ is an outerplanar near-triangulation Theorem~\ref{thm:algebraic} applies, and as $C(a)=C(y)$, then in $P(G')$ there is a non-vanishing monomial with $a^1$ and $y^1$. If we now adjoin vertex $x$ to $a$, creating graph $G''$, then it $P(G'')$ there is a non-vanishing monomial with $x^0$, $a^2$ and $y^1$. Now adding a path between $x$ and $b$, thus reconstructing $G$ (notice that the length of this path is at least 2, as the face is not a triangle), by lemma \ref{le:path} we obtain a desired monomial. The case when $d(x,b)$ and $C(b)=C(y)$ is handled analogously.
	
	If this is not the case, then either $d(x,a) > 1$ and $d(x,b) > 1$, or $d(x,a)=1$ and $C(a) \ne C(y)$ (or analogously $d(x,b)=1$ and $C(b) \ne C(y)$). In the first case, then by Theorem~\ref{thm:algebraic} and lemma \ref{lem:l1} in $P(G')$ (with $G'$ defined as previously) there is a non-vanishing monomial with $y^0$ and all other powers less than 3. Now as we join $x$ with $a$ and $b$ with previously deleted paths, lemma \ref{le:path} gives us a monomial with $x^0, y^0$ and all other powers less than 3. In the second case, as $C(a) \ne C(y)$, by \ref{thm:algebraic} there is a monomial in $P(G')$ where $y$ has power 0 and $a$ has power 1. Adjoining $x$ to $a$, we obtain a monomial with $x^0, y^0$ and $a^2$, and as we join $x$ with $b$ by a path, lemma \ref{le:path} gives us a desired monomial. Case when $d(x,b)=1$ and $C(b) \ne C(y)$ is again analogous to the last one.
\end{proof}

\begin{theorem}\label{thm:nontri1}
	Let $G$ be a 2-connected $xy$-fundamental graph with exactly one non-triangular interior face, and that face does not contain $x$ nor $y$. Let $V(G) = \lbrace x, y, a, b, c, v_1, \dots, v_n \rbrace$, where $a, b$ and $a ,c$ are the two pairs of vertices of non-triangular face belonging to the neighbouring interior faces, and let $C(v)$ be the colour class of vertex $v$ in the 3-colouring of the subgraph of $G$ created by deleting the path connecting $b$ and $c$. Then in $P(G)$:
		\begin{enumerate}
		\item there is a non-vanishing monomial of the form $\eta_1 x^1 y^1 a^{\alpha_a} b^{\alpha_b} c^{\alpha_c} \Pi v_i^{\alpha_i}$, $\alpha_k \le 2$, if $C(x)=C(a)=C(y)$; 
		\item there is a non-vanishing monomial of the form $\eta_2 x^0 y^1 a^{\alpha_a} b^{\alpha_b} c^{\alpha_c} \Pi v_i^{\alpha_i}$, $\alpha_k \le 2$, if $C(x) \ne C(a)=C(y)$ or $C(x)=C(a) \ne C(y)$;
		\item there is a non-vanishing monomial of the form $\eta_3 x^0 y^0 a^{\alpha_a} b^{\alpha_b} c^{\alpha_c} \Pi v_i^{\alpha_i}$, $\alpha_k \le 2$, if $C(x) \ne C(a) \ne C(y)$;
	\end{enumerate}
\end{theorem}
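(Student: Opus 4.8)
The plan is to follow the template of Theorems~\ref{thm:cutvertices} and~\ref{thm:nontri0}: peel the non-triangular face off, read controlled monomials from the two triangulated pieces with Corollary~\ref{cor:Hutch}, amalgamate them at the apex $a$ with Lemma~\ref{lem:l2}, and finally glue the face back on with Lemma~\ref{le:path}. First I would delete the internal vertices of the path connecting $b$ and $c$, obtaining $G'=G_x\cup_a G_y$, where $G_x$ is the $2$-connected near-triangulation on the $x$-side of the chord $ab$ (so $x,a,b\in V(G_x)$) and $G_y$ the one on the $y$-side of $ac$ (so $y,a,c\in V(G_y)$), the two blocks meeting only at $a$. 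This $G'$ is precisely the graph whose $3$-colouring defines $C$; as $a$ is a cutvertex of $G'$, each of the relations $C(x)$ versus $C(a)$ and $C(a)$ versus $C(y)$ is rigidly fixed inside its own block. Reconstructing $G$ merely reattaches the path $b\,p_1\cdots p_r\,c$ (of length at least $2$, since the face is not a triangle), and because the blocks share no edge and the path edges are new, $P(G)=P(G_x)\,P(G_y)\,R$, where $R$ is the product of the edge-factors of that path; the only shared variables are $a$ (between the blocks) and $b,c$ (between a block and the path).

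The crux is how Corollary~\ref{cor:Hutch}, applied to a block with the pair of distinguished vertices $(x,a)$ (respectively $(y,a)$), simultaneously pins the exponents of $x$ (resp. $y$) and of $a$ while throwing every remaining variable --- in particular $b$ (resp. $c$) --- into the pool of exponents bounded by $2$. Writing $(u,a)$ for the distinguished pair of a block, the reading is: if $C(u)\ne C(a)$, then by point (ii) (and its symmetric form) both $u^1a^0$ and $u^0a^1$ occur as non-vanishing monomials; whereas if $C(u)=C(a)$, then point (i) kills $u^0a^0$ and point (ii) kills $u^1a^0$ and $u^0a^1$, so by point (iii) the unique survivor with both exponents at most $1$ is $u^1a^1$. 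Choosing the distinguished pair to be $(x,a)$ rather than $(x,b)$ is exactly what makes $a$ individually controlled and bounded by $1$ in each block, which is what I need next.

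I would then amalgamate the two chosen block-monomials at $a$ by Lemma~\ref{lem:l2}; the $a$-exponents add, and since each is at most $1$ their sum never exceeds $2$. Reattaching the face by Lemma~\ref{le:path} contributes the monomial $b^0c^0p_1^2p_2^1\cdots p_r^1$, which leaves the exponents of $b$ and $c$ (already at most $2$) untouched and keeps every $p_i$ at most $2$ --- this is the same two-point path reattachment used at the end of Theorem~\ref{thm:nontri0}. Running the three colour cases now finishes the proof: if $C(x)=C(a)=C(y)$ each block is forced to $u^1a^1$, giving $x^1y^1a^2$ and point (i); if exactly one equality fails, one block supplies $u^0a^1$ and the other $u^1a^1$, giving a monomial whose exponents of $x$ and $y$ are $0$ and $1$ in some order, which is point (ii); and if both inequalities are strict each block supplies $u^0a^1$, giving $x^0y^0a^2$ and point (iii).

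The only delicate point --- the main obstacle --- is the bookkeeping of the exponent of $a$ coupled with the two-point reattachment of the path: one must be sure that the apex never overflows past $2$ and that gluing a path between two vertices already present creates no cancellation, which is guaranteed because the path-monomial of Lemma~\ref{le:path} spends its whole degree on the internal vertices and so forces its decomposition to be unique. I also expect to have to flag a harmless asymmetry in point (ii): when $C(x)=C(a)\ne C(y)$ the forced $u^1a^1$ in $G_x$ pins the exponent of $x$ at $1$, so the monomial actually produced is $x^1y^0$ rather than the literal $x^0y^1$ displayed; since $b$ and $c$ --- equivalently $x$ and $y$ --- enter the construction symmetrically, this is immaterial for the intended conclusion that exactly one of the two exponents equals $1$.
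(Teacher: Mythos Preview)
Your approach is essentially the paper's own: delete the $b$--$c$ path to obtain $G'$, a near-triangulation with the single cutvertex $a$, extract the right monomial from $G'$, and reattach the path with Lemma~\ref{le:path}. The only packaging difference is that the paper cites Theorem~\ref{thm:cutvertices} directly for $G'$, whereas you unpack that citation into its ingredients (Corollary~\ref{cor:Hutch} on each block, glued at $a$ by Lemma~\ref{lem:l2}). Your justification of the path reattachment is more explicit than the paper's and is correct.

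The one place where your unpacking loses something is the asymmetry you flag in case~(ii). When $C(x)=C(a)\ne C(y)$, Corollary~\ref{cor:Hutch} indeed forces $x^1a^1$ in $G_x$, so your route yields $x^1y^0$ rather than the stated $x^0y^1$. The paper's route, via Theorem~\ref{thm:cutvertices}, does produce the literal $x^0y^1$ in both subcases: the proof of that theorem invokes Lemma~\ref{lem:l1} on the block where the two distinguished vertices share a colour class, obtaining a monomial with $x^0a^2$ (and one neighbour of $a$ at exponent~$1$) instead of $x^1a^1$. If you want to match the statement exactly rather than up to the $x\leftrightarrow y$ symmetry, swap in Lemma~\ref{lem:l1} for Corollary~\ref{cor:Hutch}(iii) on the equal-colour block; otherwise your argument is complete.
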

\begin{figure}[htb]
\begin{center}
	\includegraphics[width = 0.7\textwidth]{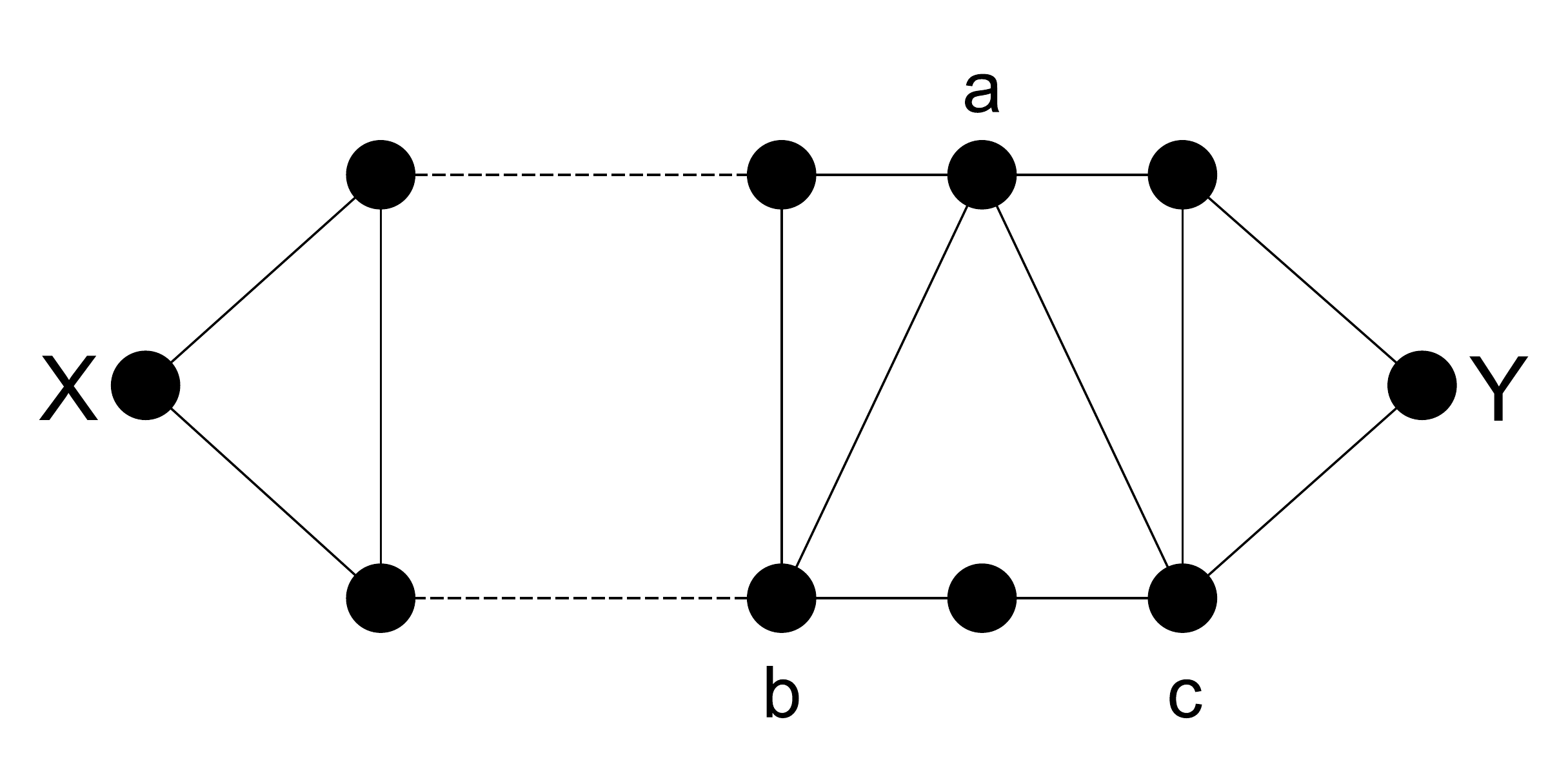}
\end{center}
\caption{An example of labelling described in Theorem~\ref{thm:nontri1}.} \label{fig:nontri1}
\end{figure}
\begin{proof}
	Let $G'$ be the subgraph of $G$ obtained by deleting path connecting $b$ and $c$ from $G$. Obviously $G'$ is an outerplanar near-triangulation with a single cutvertex $a$, hence Theorem~\ref{thm:cutvertices} applies to it. Notice moreover, that the first case of the above theorem leads to the first case of Theorem~\ref{thm:cutvertices}, and the second and third case also relate similarly. As Theorem~\ref{thm:cutvertices} gives us suitable monomials, when we add back the path we previously deleted, an application of lemma \ref{le:path} finishes the proof.
\end{proof}

\begin{theorem}\label{thm:nontri2}
	Let $G$ be a 2-connected $xy$-fundamental graph with exactly one non-triangular interior face, and that face does not contain $x$ nor $y$. Let $V(G) = \lbrace x, y, a, b, c, d, v_1, \dots, v_n \rbrace$, where $a, b$ and $c,d$ are the two pairs of vertices of the non-triangular face belonging to the neighbouring interior faces with $ab \in E(G)$ and $cd \in E(G)$, and let $C(v)$ be the colour class of vertex $v$ in the 3-colouring of the subgraphs of $G$ created by deleting the paths connecting $a$ with $c$ and $b$ with $d$. Then in $P(G)$:
	\begin{enumerate}
		\item there is a non-vanishing monomial of the form $\eta_1 x^0 y^1 a^{\alpha_a} b^{\alpha_b} c^{\alpha_c} d^{\alpha_d} \Pi v_i^{\alpha_i}$, $\alpha_k \le 2$, if $d(a,c)=1$, $C(x)=C(a)$ and $C(y)=C(c)$ OR $d(b,d)=1$, $C(x)=C(b)$ and $ C(y)=C(d)$;
		\item there is a non-vanishing monomial of the form $\eta_2 x^0 y^0 a^{\alpha_a} b^{\alpha_b} c^{\alpha_c} d^{\alpha_d} \Pi v_i^{\alpha_i}$, $\alpha_k \le 2$ otherwise;
	\end{enumerate}
\end{theorem}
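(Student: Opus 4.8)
The plan is to mirror the reductions used for Theorems~\ref{thm:nontri0} and~\ref{thm:nontri1}: delete the interiors of the two boundary paths of the non-triangular face, analyse the two resulting triangulated pieces with Corollary~\ref{cor:Hutch} and Theorem~\ref{thm:algebraic}, and glue everything back with Lemma~\ref{le:path} and Lemma~\ref{lem:l2}. First I would remove the internal vertices of the path $\pi_{ac}$ joining $a$ to $c$ and of the path $\pi_{bd}$ joining $b$ to $d$. Since $ab\in E(G)$ lies on the $x$-side and $cd\in E(G)$ on the $y$-side, this splits $G$ into two vertex-disjoint $2$-connected near-triangulations $G_x$ (containing $x$ and the edge $ab$) and $G_y$ (containing $y$ and the edge $cd$), and yields the factorisation $P(G)=P(G_x)\,P(G_y)\,Q_{ac}\,Q_{bd}$, where $Q_{ac},Q_{bd}$ are the products of edge-differences along the two paths. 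This is exactly the colouring used in the statement: the classes $C(\cdot)$ are computed independently in $G_x$ and in $G_y$, and $C(a)\neq C(b)$, $C(c)\neq C(d)$ because $ab,cd$ are edges.

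Next I would set up the \emph{degree budget}. To realise $x^0$ in $G_x$ one of $a,b$ must carry a positive exponent, and Corollary~\ref{cor:Hutch} pins it down: an $x^0$ monomial with all remaining exponents $\le 2$ exists with $a$-exponent $1$ iff $C(x)\neq C(a)$, while when $C(x)=C(a)$ every such $x^0$ monomial is forced to have $a$-exponent exactly $2$ (and symmetrically for $b$, and on the $y$-side). A long connecting path ($d(a,c)\ge 2$ or $d(b,d)\ge 2$) is reattached via Lemma~\ref{le:path} contributing exponent $0$ to both of its endpoints, hence costing nothing; a single connecting edge ($d(a,c)=1$ or $d(b,d)=1$) must instead add $1$ to one of its endpoints. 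The obstruction to $x^0y^0$ is then precisely a connecting edge both of whose endpoints are colour-forced up to exponent $2$ --- i.e.\ $d(a,c)=1,\ C(x)=C(a),\ C(y)=C(c)$, or the $bd$-symmetric condition. There the mandatory $+1$ cannot be placed anywhere, so $x^0y^0$ is impossible and only $x^0y^1$ survives, which is case (i); this impossibility direction follows cleanly by applying points (i)--(ii) of Corollary~\ref{cor:Hutch} to the $G_x$- and $G_y$-parts separately.

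For existence I would argue as follows. In case (i) the anchored exponent sits on the $y$-side, so I apply Theorem~\ref{thm:algebraic} to $G_y$ (whose two degree-$2$ vertices, being a fundamental strip, are $y$ and one of $c,d$) with $y$ in the rôle of the power-$1$ vertex and $cd$ as the $(y,z)$-pair: its middle monomial $y^1c^1d^1$ is non-vanishing exactly because $C(y)=C(c)$ forbids $y,c,d$ from using three distinct colours, and it leaves $c,d$ at exponent $1$, making room for both edge-dumps. Combining this with an $x^0a^2\cdots$ monomial of $G_x$ (which exists: route the $x^0$-deficiency through $b$ using $C(x)\neq C(b)$, whereupon $C(x)=C(a)$ forces the $a$-exponent to $2$) and Lemmas~\ref{le:path}, \ref{lem:l2} produces the required $x^0y^1$ monomial. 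In case (ii) I first note that the hypotheses always permit an \emph{opposite-side} dumping pattern: either $C(x)\neq C(b)$ and $C(y)\neq C(c)$, or $C(x)\neq C(a)$ and $C(y)\neq C(d)$ (a short check using $C(a)\neq C(b)$, $C(c)\neq C(d)$ and the negation of the case-(i) condition). Dumping the two edges on opposite sides then needs only one two-vertex constraint per side, each supplied by Corollary~\ref{cor:Hutch}, and yields $x^0y^0$.

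The delicate point, and the step I expect to be the main obstacle, is \emph{non-vanishing} of the constructed monomial. In every situation except a quadrilateral face in case (ii) a single factorisation can be isolated: Lemma~\ref{le:path} forces a long path's endpoints to exponent $0$ (its fresh internal vertices make that factor unique), any colour-forced exponent-$2$ endpoint removes the freedom of its incident short edge, and the fixed total degree of each $P(G_\bullet)$ pins whatever remains, so the coefficient is $\pm1$. The genuinely hard case is the quadrilateral face, where $d(a,c)=d(b,d)=1$ and we are in case (ii): here $P(G)=P(G_x)\,P(G_y)\,(a-c)(b-d)$ and, writing $C_x(i,j)$ and $C_y(i,j)$ for the coefficients of the relevant $x^0$- and $y^0$-monomials with the indicated exponents on $a,b$ and $c,d$, the coefficient of the target monomial is the four-term alternating sum
\[
C_x(e_a-1,e_b-1)C_y(e_c,e_d)-C_x(e_a-1,e_b)C_y(e_c,e_d-1)-C_x(e_a,e_b-1)C_y(e_c-1,e_d)+C_x(e_a,e_b)C_y(e_c-1,e_d-1).
\]
A diagonal double-flip of both edges preserves both total degrees, so degree alone does not rule out cancellation. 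To finish I would compute these coefficients exactly by peeling the degree-$2$ vertices, via $P(G_x)=(x-p)(x-q)P(G_x-x)$ and the analogous identity for $y$, and applying Theorem~\ref{thm:algebraic} to the smaller graphs; the $\{-1,0,1\}$-pattern of its coefficients, together with the support restriction to a single total-degree antidiagonal, should collapse the sum to one surviving $\pm1$ term. Certifying this determinant-type non-vanishing in the quadrilateral case is where the real work lies.
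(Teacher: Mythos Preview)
Your decomposition---delete the interiors of the two boundary paths, factor $P(G)=P(G_x)P(G_y)Q_{ac}Q_{bd}$, analyse $G_x,G_y$ via Corollary~\ref{cor:Hutch}, and reattach the paths---is exactly the paper's strategy for case~(ii). For case~(i) you take a different route: the paper triangulates the face by adding chords from $a$ to $d$ and to the internal vertices of $\pi_{bd}$, and then invokes Corollary~\ref{cor:Hutch} (when $d(a,c)=1$) or Theorem~\ref{thm:nontri1} (when $d(a,c)>1$) on the resulting near-triangulation; you instead stay with the split pieces and exploit the middle monomial $y^{1}c^{1}d^{1}$ of Theorem~\ref{thm:algebraic} on $G_y$, which is non-vanishing precisely because $C(y)=C(c)$ prevents $y,c,d$ from receiving three distinct colours. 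Your variant works and is arguably tidier. You are also right that, with the Lemma~\ref{le:path} monomial fixing both path endpoints at exponent~$0$, the only configuration in case~(ii) where two distinct factorisations can hit the same target is the quadrilateral $d(a,c)=d(b,d)=1$.

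The genuine gap is the one you flag yourself: you do not establish non-vanishing in the quadrilateral case. Your proposed route---peel the degree-$2$ vertices and evaluate a four-term alternating sum via Theorem~\ref{thm:algebraic}---is not carried out, and there is no reason to expect it to collapse to a single $\pm1$ without further input. The paper resolves this with a different idea that you are missing. It replaces the Lemma~\ref{le:path} monomial (endpoints $0,0$) by the \emph{oriented} path monomial (endpoints $1,0$, all internals $1$), and crucially uses that when $C(x)\neq C(a)$ and $C(x)\neq C(b)$ the two $x^{0}$-monomials of $P(G_x)$ with the remaining exponents equal to~$2$, namely $x^{0}a^{1}b^{2}\prod v_i^{2}$ and $x^{0}a^{2}b^{1}\prod v_i^{2}$, have \emph{opposite} coefficients $\eta_1$ and $-\eta_1$ (and similarly on the $y$-side). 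With this sign relation, the second factorisation---obtained by simultaneously reversing both path orientations and swapping to the other $G_x$- and $G_y$-monomials---produces the \emph{same} sign as the first, so the two contributions add (doubling the coefficient) rather than cancel. In the easier sub-case where one colour constraint such as $C(y)=C(c)$ holds, the paper simply observes that the alternate $G_y$-monomial vanishes, so only one factorisation survives. Either way, the missing ingredient in your proposal is this opposite-sign relation between the two admissible $x^{0}$-monomials of a fundamental strip; without it your four-term sum has no mechanism preventing cancellation.
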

\begin{figure}[htb]
\begin{center}
	\includegraphics[width = 0.7\textwidth]{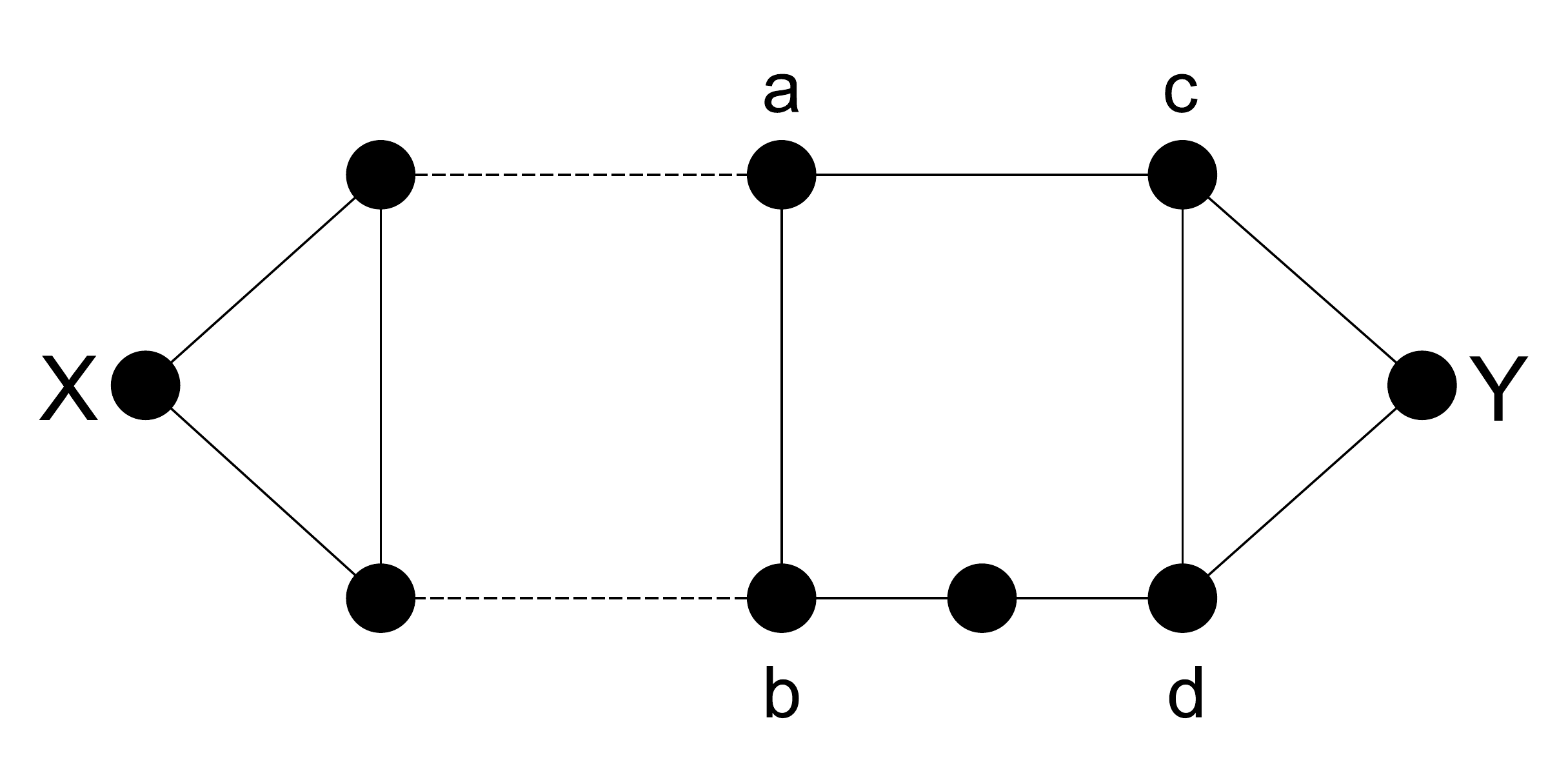}
\end{center}
\caption{An example of labelling described in Theorem~\ref{thm:nontri2}.} \label{fig:nontri2}
\end{figure}
\begin{proof}
	Suppose at first that $C(x)=C(a)$ and $C(y)=C(c)$. We can connect vertex $a$ with $d$, and if $d(b,d) > 1$, also with every interior vertex on the path connecting $b$ with $d$, thus obtaining an $xy$-fundamental 2-connected near triangulation $G'$. If $d(a,c) = 1$, then $C(a) \neq C(c)$, thus $C(x) \neq C(y)$, and by Corollary~\ref{cor:Hutch} $P(G')$ contains a non-vanishing monomial with $x^0$, $y^1$ and every other exponent equals 2. As neither $x$ nor $y$ were affected by addition of edges to $G$, $P(G)$ contains a non-vanishing monomial of the form $\eta_1 x^0 y^1 a^{\alpha_a} b^{\alpha_b} c^{\alpha_c} d^{\alpha_d} \Pi v_i^{\alpha_i}$, $\alpha_k \le 2$. If $d(a,c) > 1$, then $G'$ fulfils the conditions of Theorem~\ref{thm:nontri1}, with $d$ serving as vertex $a$ in the statement of that theorem. Moreover, as $C(x)=C(a)$ and $C(y)=C(c)$, and $d$ neighbours both $a$ and $c$ in $G'$, then in colouring of $G'$ $C(x)\neq C(d)$ and $C(y) \neq C(d)$. Hence by Theorem~\ref{thm:nontri1} $P(G')$ contains a non-vanishing monomial with $x^0$, $y^0$ and every other exponent no larger than 2, and this again implies that there is a non-vanishing monomial of the form $\eta_2 x^0 y^0 a^{\alpha_a} b^{\alpha_b} c^{\alpha_c} d^{\alpha_d} \Pi v_i^{\alpha_i}$, $\alpha_k \le 2$ in $P(G)$. The case when $C(x)=C(b)$ and $ C(y)=C(d)$ is analogous.
	
	Suppose now that $C(x) \neq C(a)$ and $C(y)=C(c)$. Start by removing the paths from $a$ to $c$ and $b$ to $d$ from $G$. This leaves us with two separate, 2-connected near triangulations $G'$ and $G''$ with $\lbrace x,a,b \rbrace \in V(G')$ and $\lbrace y,c,d \rbrace \in V(G'')$. As $C(y)=C(c)$, then $C(y) \neq C(d)$, and by Corollary~\ref{cor:Hutch} in $P(G'')$ there is a non-vanishing monomial of the form $\eta_1 y^0d^1c^2 \Pi v_i^2$. Now as $C(x) \neq C(a)$, there exists a non-vanishing monomial $\eta_1 x^0a^1b^2 \Pi u_i^2$ in $P(G')$, as the polynomial of $xa$-fundamental subgraph of $G'$ contains a non-vanishing monomial with $x^0$ and $a^1$, and as $G'$ is a 2-connected near triangulation, every other exponent must be equal to 2. Now add back the previously removed paths. Each of them contains in its graph polynomial a non-vanishing monomial with every exponent equal to 1, except for one of its endpoints, which has power 0. We will call that monomial \emph{oriented towards} the endpoint with non-zero exponent. Add paths connecting $a$ with $c$ and $b$ with $d$ to $G'$ and $G''$, and by multiplication of the monomials described above we obtain a monomial of the form $\eta_2 x^0 y^0 a^{\alpha_a} b^{\alpha_b} c^{\alpha_c} d^{\alpha_d} \Pi v_i^{\alpha_i}$, $\alpha_k \le 2$ in $P(G)$, where exponent of each of the vertices $a,b,c,d$ is equal to 2. This monomial does not vanish, as the only other way to get this monomial would require us to orient both of the paths in the opposite direction, but this would imply that there were a non-vanishing monomial  $\eta_1 y^0d^2c^1 \Pi v_i^2$ in $P(G'')$, which is not the case as $C(y)=C(c)$. Cases where $C(x) = C(a)$ and $C(y) \neq C(c)$, $C(x) \neq C(b)$ and $C(y)=C(d)$ or $C(x) = C(b)$ and $C(y) \neq C(d)$ are sorted out in the same manner.
	
	The last case is when $C(x) \neq C(a)$ and $C(y) \neq C(c)$. Observe at first, that we can also assume that $C(x) \neq C(b)$ and $C(y) \neq C(d)$, as all the other cases were already solved in previous arguments due to symmetries. Let $G'$ and $G''$ be as in previous case. As $C(b) \neq C(x) \neq C(a)$, then in $P(G')$ there are non-vanishing monomials $\eta_1 x^0a^1b^2 \Pi v_i^2$ and $-\eta_1 x^0a^2b^1 \Pi v_i^2$. Similarly, there are non-vanishing monomials $\eta_2 y^0c^1d^2 \Pi u_i^2$ and $-\eta_2 y^0c^2d^1 \Pi u_i^2$ in $P(G'')$. Now reconstruct $G$ as previously, orienting path connecting $a$ and $c$ towards $a$ and path connecting $b$ and $d$ towards $d$. To comply with requirements of the thesis, we have to use the first and fourth monomial from those specified above, thus in $P(G)$ we have a monomial $-\eta_1 \eta_2 x^0y^0a^2b^2c^2d^2 \Pi v_i^2$. The only other way to reach this set of exponents is to use the second and third monomial, and orient paths in opposite directions, but as a simultaneous switch of orientations preserves sign, we again obtain $-\eta_1 \eta_2 x^0y^0a^2b^2c^2d^2 \Pi v_i^2$, so those monomials do not annihilate each other, but rather double the coefficient. As all cases are now addressed, the proof is complete.
\end{proof}
\vspace{1em}

\subsection{General outerplanar graphs}~\\

The three theorems above can be combined with Theorem~\ref{thm:cutvertices} to obtain a general characterisation of $(i,j)$-extendability of outerplanar graphs. We will start with some technicalities.

\begin{definition}
	Let $G$ be an outerplanar graph. A non-triangular inner face of $G$ will be called \emph{type 0} if it is as defined in Theorem~\ref{thm:nontri0} (with possibly $y$ belonging to that face instead of $x$), \emph{type 1} if it is as defined in Theorem~\ref{thm:nontri1} and \emph{type 2} if it is as defined in Theorem~\ref{thm:nontri2}. In case of type 1 faces, the vertex belonging to the two neighbouring faces will be called an \emph{apex} of that face.
\end{definition}

\begin{lemma}\label{lem:2conn}
	Let $G$ be a connected outerplanar graph with $V(G)=\lbrace x,y,v_1, \dots, v_i \rbrace$ and let $G'$ be a supergraph of $G$ obtained by adding a path of the length 2 to $G$ in a way that preserves outerplanarity. Then the monomial $x^{\alpha_x}y^{\alpha_y}\Pi v_i^{\alpha_i}$ does not vanish in $P(G)$ if and only if the monomial $x^{\alpha_x}y^{\alpha_y}\Pi v_i^{\alpha_i}z^2$ does not vanish in $P(G')$, where $z$ is the middle vertex of the added path.
\end{lemma}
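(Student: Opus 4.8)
The plan is to observe that adding a path of length $2$ introduces exactly one new vertex, namely its interior vertex $z$, together with exactly two new edges, and then to read off the $z^2$-coefficient of $P(G')$. First I would fix notation for the added path. Since the monomial in the statement involves no new variables besides $z$ (the factor $\prod v_i^{\alpha_i}$ ranges over $V(G)$ only), both endpoints of the added path must already lie in $G$; write the path as $a$--$z$--$b$ with $a,b\in V(G)$, $a\neq b$, and $z\notin V(G)$. Thus $V(G')=V(G)\cup\{z\}$ and $E(G')=E(G)\cup\{az,bz\}$. Because the vanishing of an individual monomial is orientation-invariant (as noted after the definition of $P(G)$), I may fix an orientation of $G$ and extend it to $G'$ by orienting both new edges into $z$; with this compatible choice the edge factors coming from $E(G)$ are literally the same in $P(G)$ and in $P(G')$, so that $P(G')=P(G)\cdot(a-z)(b-z)$.

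Next I would expand $(a-z)(b-z)=z^2-(a+b)z+ab$ and regard $P(G')$ as a polynomial in $z$ with coefficients in the polynomial ring over the variables $x,y,v_1,\dots,v_i$. The crucial point is that $z$ does not occur in $P(G)$, so the three summands $z^2$, $-(a+b)z$, $ab$ contribute, respectively, to the coefficients of $z^2$, $z^1$, $z^0$ with no cross-contamination. In particular the coefficient of $z^2$ in $P(G')$ is exactly $P(G)$. Consequently the coefficient of $x^{\alpha_x}y^{\alpha_y}\prod v_i^{\alpha_i}z^2$ in $P(G')$ coincides with the coefficient of $x^{\alpha_x}y^{\alpha_y}\prod v_i^{\alpha_i}$ in $P(G)$, and the two monomials are non-vanishing simultaneously, which is precisely the asserted equivalence.

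The argument is genuinely short, so the only place demanding care is confirming that $z$ is incident to \emph{exactly} the two new edges $az$ and $bz$: this is where the hypothesis that the added path has length $2$ (so $z$ is its interior vertex, of degree $2$ in $G'$) is essential, since it guarantees that $z$ contributes the single quadratic factor $(a-z)(b-z)$ and nothing more. The clean extraction of the $z^2$-coefficient then rests entirely on $z$ being a fresh variable, so that no monomial of $P(G)$ can interfere with it; the global sign introduced by an alternative orientation is harmless, as it cannot turn a nonzero coefficient into zero. I do not expect any substantive obstacle beyond pinning down the interpretation that only one new vertex is added.
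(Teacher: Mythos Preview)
Your proposal is correct and follows essentially the same route as the paper: both arguments write $P(G')=P(G)\cdot(a-z)(b-z)$, expand the quadratic factor, and use that $z\notin V(G)$ to identify the $z^2$-coefficient of $P(G')$ with $P(G)$. The paper phrases the reverse implication as ``the only way to obtain the monomial $\dots z^2$ is by multiplying $\dots$ by $z^2$'', which is exactly your coefficient-extraction observation; your extra remarks about orientation are harmless but not needed.
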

\begin{proof}
	The implication from $P(G)$ to $P(G')$ is obvious and was shown to be true and utilized multiple times in this paper. Suppose there is a non-vanishing monomial $x^{\alpha_x}y^{\alpha_y}\Pi v_i^{\alpha_i}z^2$ in $P(G')$. As $P(G')=P(G)(ab-az-bz+z^2)$, where $a,b$ are the endpoints of the added path, and none of the monomials from $P(G)$ contains $z$ due to the fact that $z \notin V(G)$, then the only way to obtain the monomial above is by multiplying $x^{\alpha_x}y^{\alpha_y}\Pi v_i^{\alpha_i}$ by $z^2$, thus the former must occur in $P(G)$.
\end{proof}

\begin{definition}
	Let $G$ be a 1-connected fundamental outerplanar graph. For every cutvertex of $G$ that is not an endpoint of any bridge add a path of length 2, connecting the pair of some neighbours of that cutvertex without disrupting outerplanarity, thus creating a non-triangular face of type 0. Then for every bridge or chain of bridges of $G$ add a path of the length 2 connected to the pair of the neighbours of the endpoints of that bridge or chain of bridges (or to the neighbour and the endpoint if it has degree 1) in a way that preserves outerplanarity, creating a face of type 2 (or type 0). Finally, if $G$ is a path, connect endpoints of that path with a path of length 2. The resulting supergraph of $G$ will be called a \emph{2-connection} of $G$. The 2-connection of A 2-connected graph would be the graph itself.
\end{definition}
Notice, that the 2-connection of a 1-connected graph is not unique --- for example, the graph on Figure~\ref{fig:2conect} has 8 different 2-connections. However, each of the 2-connections has the same relevant properties --- namely the color classes of the cutvertices and types of the newly created non-triangular faces.

\begin{figure}[htb]
\begin{center}
	\includegraphics[width = 0.9\textwidth]{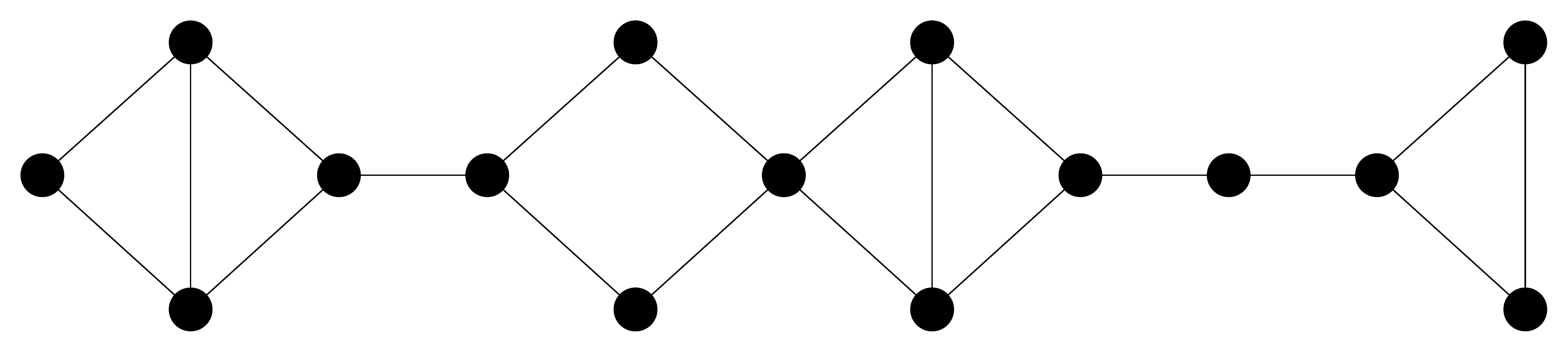}
\end{center}
\begin{center}
	\includegraphics[width = 0.9\textwidth]{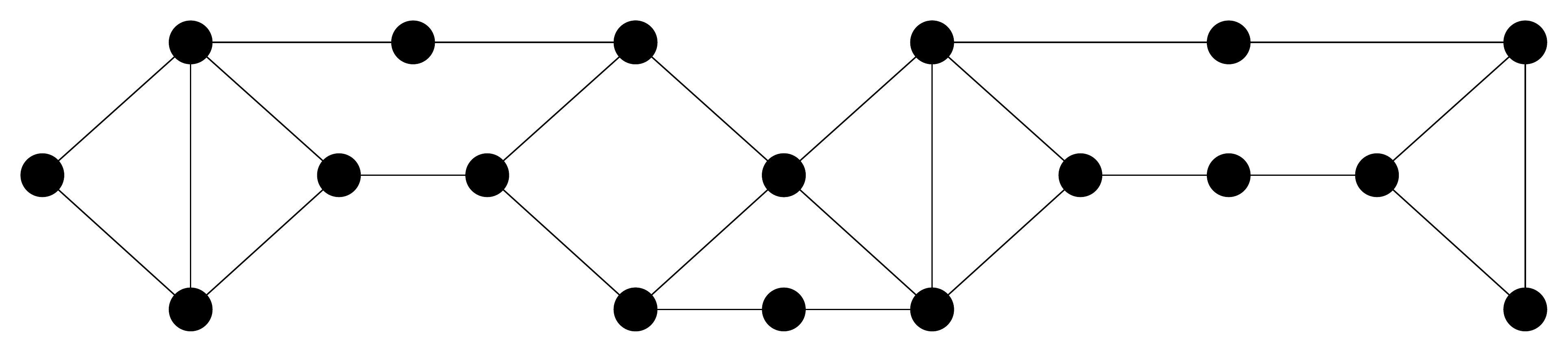}
\end{center}
\caption{Top: a connected, outerplanar graph $G$; Bottom: a possible 2-connection of $G$.} \label{fig:2conect}
\end{figure}

The following remark is a direct consequence of lemma~\ref{lem:2conn}.

\begin{remark}
	Let $G$ be a connected $xy$-fundamental outerplanar graph, $V(G)=\lbrace x,y,v_1, \dots, v_m \rbrace$ and let $G'$ be its 2-connection, $V(G')=\lbrace x,y,v_1, \dots, v_m, u_1, \dots, u_n \rbrace$. There is a non-vanishing monomial $x^{\alpha_x}y^{\alpha_y}\Pi v_i^{\alpha_i}$ in $P(G)$ if and only if there is a non-vanishing monomial $x^{\alpha_x}y^{\alpha_y}\Pi v_i^{\alpha_i} \Pi u_j^2$ in $P(G')$.
\end{remark}

The following theorem presents a full characterisation of the polynomial extendability of connected fundamental outerplanar graphs.

\begin{theorem} \label{thm:general}
	Let $G$ be a connected $xy$-fundamental outerplanar graph, $V(G)=\lbrace x,y,v_1, \dots, v_i \rbrace$, and let $G'$ be a 2-connection of $G$. Then in $P(G)$:
	\begin{enumerate}
		\item there is a non-vanishing monomial of the form $x^1y^1\Pi v_i^{\alpha_i}$, $\alpha_k \le 2$ if $G$ is a 2-connected near-triangulation with $C(x)=C(y)$ OR $G$ is as in point 1 of Theorem~\ref{thm:cutvertices} OR every non-triangular face of $G'$ is of type 1 and every apex, $x$ and $y$ have the same colour in every 3-colouring of $G$.
		\item there is a non-vanishing monomial of the form $x^0y^1\Pi v_i^{\alpha_i}$, $\alpha_k \le 2$ if $G$ is a 2-connected near-triangulation with $C(x) \neq C(y)$ OR $G$ is as in point 2 of Theorem~\ref{thm:cutvertices} OR $G'$ is as in point 1 of Theorem~\ref{thm:nontri0} OR $G'$ is as in point 1 of Theorem~\ref{thm:nontri2} OR every non-triangular face of $G'$ is of type 1 and in every 3-colouring of $G'$ there is exactly one pair of consecutive apexes (or either $x$ or $y$ with the closest apex) with different colours OR only one of the non-triangular faces of $G'$ is not of the type 1 and conditions of point 1 of Theorem~\ref{thm:nontri2} are fulfilled on that face.
		\item there is a non-vanishing monomial of the form $x^0y^0\Pi v_i^{\alpha_i}$, $\alpha_k \le 2$ otherwise.
	\end{enumerate}	
\end{theorem}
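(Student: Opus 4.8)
The plan is to reduce everything to the $2$-connected case and then induct on the number of non-triangular faces, stitching together the ``local'' results of Theorems~\ref{thm:cutvertices}, \ref{thm:nontri0}, \ref{thm:nontri1} and~\ref{thm:nontri2} with the gluing Lemmas~\ref{lem:l1}, \ref{lem:l2} and~\ref{le:path}. First I would pass from $G$ to a $2$-connection $G'$ of $G$. By the Remark following Lemma~\ref{lem:2conn}, a monomial $x^{\alpha_x}y^{\alpha_y}\prod v_i^{\alpha_i}$ is non-vanishing in $P(G)$ exactly when $x^{\alpha_x}y^{\alpha_y}\prod v_i^{\alpha_i}\prod u_j^2$ is non-vanishing in $P(G')$, so it suffices to produce the required monomial in $P(G')$; the added vertices $u_j$ carry exponent $2\le 2$ and never touch the exponents of $x$ and $y$. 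Since $G$ is $xy$-fundamental, $G'$ is a $2$-connected $xy$-fundamental graph whose weak dual is a path, and by construction every non-triangular face of $G'$ is of type $0$, $1$ or $2$. Its faces thus arrange into a linear chain $F_1,\dots,F_t$ with $x$ on $F_1$ and $y$ on $F_t$, the non-triangular faces being separated from one another by chords, and between consecutive non-triangular faces lie maximal ``bands'' that are $2$-connected near-triangulations.

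Next I would argue by induction on the number of non-triangular faces of $G'$. In the base cases there are no non-triangular faces, so $G'$ is a $2$-connected near-triangulation and Corollary~\ref{cor:Hutch} yields the trichotomy directly ($x^1y^1$ when $C(x)=C(y)$, $x^0y^1$ when $C(x)\ne C(y)$, and no $x^0y^0$); and there is exactly one non-triangular face, which is handled verbatim by whichever of Theorems~\ref{thm:nontri0}, \ref{thm:nontri1} and~\ref{thm:nontri2} applies. These base cases already account for all the single-face alternatives appearing in the ``OR'' clauses of the statement. For the inductive step I would cut $G'$ along the chord separating the non-triangular face nearest $x$ from the remainder: this splits $G'$ into a piece $G'_1$ containing $x$ together with that one non-triangular face (to which a single-face theorem or Corollary~\ref{cor:Hutch} applies) and a piece $G'_2$ containing $y$ with one fewer non-triangular face (to which the induction hypothesis applies), the two pieces sharing only the chord endpoints. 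I would then recombine the local monomials using Lemma~\ref{lem:l2} to identify shared vertices and Lemma~\ref{le:path} to carry a monomial across any bounding path of the non-triangular face, exactly as is done in the proofs above. The governing bookkeeping is the same running count that drives Theorem~\ref{thm:cutvertices}: each segment either preserves or reverses the forced colour relationship between its two boundary vertices, and the global case is decided by the \emph{total} number of boundary pairs forced into distinct colour classes along the chain from $x$ to $y$ --- zero such pairs give $x^1y^1$ (case 1), exactly one gives $x^0y^1$ (case 2), and two or more give $x^0y^0$ (case 3). Lemma~\ref{lem:l1} supplies the monomial that transmits a power-$0$ endpoint across a same-colour near-triangulation band (at the cost of a power-$1$ interior vertex), which is what lets a single ``flip'' propagate all the way to $y$.

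The main obstacle, as throughout this section, is \textbf{non-vanishing}: when several oriented monomials are multiplied and then identified along chords, one must rule out cancellation of the target monomial against a differently oriented monomial with the same total exponent. The two available tools are the fixed-exponent-sum observation underlying Lemma~\ref{lem:l2}, which pins down the splitting of each shared exponent between the two sides and so forbids cancellation at a single junction, and the distinctness of interior exponents along a path used in Lemma~\ref{le:path}. The genuinely delicate point is the case of two or more flips: as in the final case of Theorem~\ref{thm:nontri2}, the two extreme choices of orientation may produce the \emph{same} monomial, and one must check that a simultaneous reversal of all orientations preserves the sign, so that these contributions reinforce --- doubling the coefficient --- rather than annihilate. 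Verifying this sign invariance uniformly along a chain of arbitrary length, rather than merely for the two paths of a single type-$2$ face, is the one step requiring care; everything else is an orchestration of the already-established local lemmas.
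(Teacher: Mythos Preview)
Your approach is plausible but differs from the paper's in a way worth noting. The paper does \emph{not} induct on the number of non-triangular faces by cutting along chords. Instead it first observes that every type-$1$ face can be collapsed to a cutvertex: deleting the internal vertices of the $b$--$c$ path (via Lemma~\ref{le:path}) leaves the apex $a$ as a cutvertex without disturbing the relevant monomials. After this reduction the graph is a chain of $2$-connected near-triangulated blocks glued at single vertices, and Theorem~\ref{thm:cutvertices} applies wholesale. Any remaining type-$0$ or type-$2$ faces are then handled by a short colouring argument showing that with two or more such faces one can always recolour to fall into the $(0,0)$ case of Theorems~\ref{thm:nontri0} or~\ref{thm:nontri2}, leaving only the explicitly listed one-face exceptions.

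The practical difference is exactly the obstacle you flag. Your chord-cutting induction glues along an \emph{edge}, not a single vertex, so Lemma~\ref{lem:l2} does not apply directly; you would need the factorisation $P(G')=P(G'_1)\,P(G'_2-ab)$ as in the proof of Corollary~\ref{cor:Hutch}, and then control the exponents of both $a$ and $b$ simultaneously on each side --- which the single-face theorems do not package for you in the form you need. The paper's type-$1$\,$\to$\,cutvertex trick converts every gluing into a single-vertex identification, so Lemma~\ref{lem:l2} suffices and the sign-tracking you correctly identify as delicate is never needed beyond what was already done inside Theorem~\ref{thm:nontri2}. Your route can be made to work, but it buys generality you do not need at the cost of the bookkeeping the paper avoids.
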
	

\begin{proof}
	We will omit every case that is covered already by previous theorems, leaving us only with the cases when there are multiple non-triangular faces. Suppose all of those are of type 1. It is easy to see (with some help of lemma~\ref{le:path}) that for every such face removal of all vertices belonging only to this (and outer) face produces a cutvertex, simultaneously changing nothing in terms of extendability-relevant monomials. Hence apply Theorem~\ref{thm:cutvertices}, with each apex acting as a cutvertex.
	
	Suppose now there is a face of type either 0 or 2 in $G'$. Theorems~\ref{thm:nontri0} and \ref{thm:nontri2} show that the only cases where there is no monomial in $P(G')$ (and thus in $P(G)$) with both $x$ and $y$ in power 0 is when 3-colouring $G'$ we cannot avoid a situation described in point 1 of either of these theorems on any of such faces, and in those cases there is a non-vanishing monomial with $x^0$ and $y^1$. Observe that this is not the case when there are at least two faces of type 0 or 2, as we can avoid this situation by either permuting the colours, or by changing them on vertices of degree 2 (as in case of type 0 faces at least one such vertex other than $x$ and $y$ definitely exists). So there are only two cases when we cannot avoid that. The first is when in $G'$ there is only one face of type 2, no faces of type 0, there is a pair of neighbouring vertices belonging to this face such that the only other face of $G'$ they belong to simultaneously is the outer face, and in any 3-colouring of $G$ (and thus also $G'$) each of those vertices has the same colour as $x$ or $y$, depending on which of those vertices lies on the same "side" of that face. Label the vertex from this pair lying closer to $x$ as $v_x$, and the one being closer to $y$ as $v_y$. The case of $C(x)=C(v_x)$ can occur either when on one side there are only triangular faces between $x$ and $v_x$, with the structure of that triangulation forcing the same colour of those vertices, or when for every type 1 face between those vertices, the triangular structure between neighbouring faces or between $x$/$v_x$ and the nearest such face forces the same colour on each of those vertices. The same is true for $y$ and $v_y$, with the restriction that the former situation cannot occur for both of those pairs. The second case is when there is exactly one face of type 0 in $G'$ (without loss of generality we can assume that $x$ lies on that face), no faces of type 2, $x$ has a neighbour ($v_0$) that lies also on adjacent inner face, and the colour of that vertex is the same as colour of $y$ in every 3-colouring of $G'$. This can be only caused by the fact that the apex of every type 1 face is forced to have the same colour as the others, as well as $y$ and $v_0$.
\end{proof}

Finally, we prove that Theorem~\ref{thm:general} can be restated as Theorem~\ref{thm:general2}.

\begin{proof}[Proof of Theorem~\ref{thm:general2}]
Neither the graph polynomial nor the colouring depends on a particular graph embedding. Therefore, let $G$ be any outerplanar graph with $V(G) = \{x, y, v_1, \dots, v_n\}$. At first notice, that if $G$ is not connected and $x$ and $y$ are in different connected components, one may use Theorem~\ref{thm:zhu} directly to obtain a monomial with $\beta = \gamma = 0$, so then obviously the third case occurs.

For $x$ and $y$ in one component observe that by the Hutchinson's shrinking argument it is enough to prove theorem for $G$ being $xy$-fundamental graph. See the proof of Corollary~\ref{cor:Hutch} for details. Now consider consequences of each of the situations described in the statement of Theorem~\ref{thm:general} in terms of 3-colourings. In every case of point (i) we obviously have that $C(x)=C(y)$. Moving to the second point, the first condition again directly states that $C(x) \neq C(y)$. If $G$ is as in point 2 of Theorem~\ref{thm:cutvertices} or every non-triangular face of $G'$ is of type 1 and in every 3-colouring of $G'$ there is exactly one pair of consecutive apexes (or either $x$ or $y$ with the closest apex) with different colours, as the colour class changes only once on the cutvertices/apexes, then obviously classes of terminal vertices $x$ and $y$ have to be different. If $G'$ is as in point 1 of Theorem~\ref{thm:nontri0}, then it is directly stated that the colour of one of terminal vertices is the same as the colour of one of the neighbours of the other terminal vertex, thus the colours of terminal vertices have to be different. Finally, if $G'$ is as in point 1 of Theorem~\ref{thm:nontri2} or only one of the non-triangular faces of $G'$ is not of the type 1 and the conditions of point 1 of Theorem~\ref{thm:nontri2} are fulfilled on that face, the vertices $x$ and $y$ are in the same colour class as vertices $a$ and $c$ (or $b$ and $d$), respectively, and those vertices are adjacent, hence their colours cannot possibly be the same.

Finally, observe that in any other case the colour classes of $x$ and $y$ are independent --- the structure of the graph permits the colours to be rearranged in some parts without changing the colours in the other parts, therefore the graph can be properly 3-coloured with both $C(x)=C(y)$ and $C(x) \neq C(y)$. As an example consider point (ii) of Theorem~\ref{thm:nontri0}, other cases are analogous. Starting with the triangulated part of the graph (i.e. the graph minus internal vertices of the path between $a$ and $b$ containing $x$) already coloured, analyse possible proper 3-colourings of the path from $a$ to $b$. If $\min(d(x,a),d(x,b))>1$, then we can colour $x$ with any of the 3 colours. Otherwise, suppose without loss of generality that $d(x, a) = 1$ and hence $d(x, b) > 1$. Then $x$ can be coloured with any colour except $C(a)$, but there is $C(a) \neq C(y)$. Therefore, again $x$ can get colour of $y$ or some different one.
\end{proof}

\section{Further work}
In \cite{PoThI} and \cite{PoThIII} Postle and Thomas provided extendability results for triangulated planar graphs. Namely, their results are summarized in the following theorem.
\begin{theorem}
Let $G = (V, E)$ be any plane graph, let $C \subseteq V$ be the set of vertices on the outer face, $x, y \in C$, $x \neq y$. Then
\begin{enumerate}
\item $(G, x, y)$ is $(1,2)$-extendable if and only if there exists a proper colouring $c \colon C \to \{1,2,3\}$ such that $c(x) \neq c(y)$;
\item $(G, x, y)$ is $(2,2)$-extendable.
\end{enumerate}
\end{theorem}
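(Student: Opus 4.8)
The plan is to reduce to the hardest instance and then treat the two parts separately: part (ii) by a Thomassen-type induction, and part (i) by pairing that induction with a matching obstruction argument for the ``only if'' direction. Throughout I read $(i,j)$-extendability in the planar setting as in Theorem~\ref{thm:thomassen}, namely lists of size $i$ at $x$, size $j$ at $y$, size $3$ at the remaining boundary vertices and size $5$ at the interior vertices. Since adding an edge to a plane graph can only make a prescribed list assignment harder to satisfy, and since every $2$-connected plane graph completes to a near-triangulation with the same outer boundary (non-$2$-connected graphs being handled block by block, gluing colourings across cutvertices exactly as in the outerplanar reductions of the previous sections), it suffices to prove the positive (extendability) assertions for near-triangulations whose outer face is a cycle $C$; the necessity half of (i) I will argue directly on $G$.

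For part (ii) I would induct on $|V(G)|$, strengthening Thomassen's scheme so that it carries two distinguished boundary vertices $x,y$, each restricted to a $2$-list rather than a single precoloured edge. The usual case split applies: if $C$ has a chord $ab$ I split $G$ along it and colour the two sides in turn, letting the already-coloured side prescribe admissible $2$-lists on the shared vertices of the other; if $C$ is chordless I delete a suitable boundary vertex $v\neq x,y$, remove its chosen colour from the lists of its interior neighbours (each of which still has a list of size at least $3$), and apply the inductive hypothesis. The point genuinely absent from Thomassen's original argument is that $x$ and $y$ need not be adjacent, so the induction must maintain the invariant that whichever of the two ``free'' vertices survives into a subproblem still carries a list of size at least $2$ while that subproblem retains its length-$3$/length-$5$ budget elsewhere; keeping this invariant consistent under repeated chord splits is the bookkeeping core.

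For the ``if'' direction of part (i) the same induction runs with a $1$-list at $x$, the hypothesis that $C$ admits a proper $3$-colouring $c$ with $c(x)\neq c(y)$ being used precisely to seed the base cases and the chord-splitting steps with a consistent placement of the singleton at $x$ and the doubleton at $y$. The ``only if'' direction cannot follow from any colouring-existence argument and must instead exhibit an explicit bad list assignment: if every proper $3$-colouring of $C$ forces $c(x)=c(y)$ (the phenomenon analysed for outerplanar graphs through the chain-of-diamonds and edge-glued-diamonds constructions of Figure~\ref{fig:colouring} and Corollary~\ref{cor:Hutch}(i)), then giving $x$ a singleton, $y$ a doubleton disjoint from it, and the colour classes their canonical $3$-lists leaves no proper completion. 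Formalising this requires showing that the forced equality of colour classes propagates through the triangulated interior, which is the combinatorial heart of the reverse implication.

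The main obstacle, and the reason this is genuinely harder than the outerplanar results above, is the interior structure. For outerplanar near-triangulations the polynomial method of Theorem~\ref{thm:algebraic} succeeded because every such graph reduces by removing degree-$2$ vertices and shrinking along chords, giving complete control of the exponent pattern; for general plane near-triangulations the interior vertices have unbounded degree and no analogue of that clean degree-$2$ reduction is available. Consequently an Alon--Tarsi strengthening in the spirit of Theorem~\ref{thm:zhu} --- producing a non-vanishing monomial of $P(G)$ with $x$- and $y$-exponents $\beta,\gamma$, boundary exponents at most $2$ and interior exponents at most $4$ --- would be the most natural route to part (ii) and the ``if'' half of part (i), but it inherits all the difficulty of Zhu's delicate orientation and ordering choice while additionally having to place two \emph{separated} low-exponent vertices; and since Combinatorial Nullstellensatz is one-directional it offers no help with the ``only if'' half. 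I expect the two-separated-special-vertices bookkeeping in the induction, together with the propagation argument behind the reverse implication, to be where essentially all the work lies.
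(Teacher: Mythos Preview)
The paper does not prove this theorem at all: it appears in the ``Further work'' section as a summary of results of Postle and Thomas, cited from \cite{PoThI} and \cite{PoThIII}, and is immediately followed by the open question of whether a graph-polynomial strengthening exists. There is therefore no ``paper's own proof'' to compare your proposal against.

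As to the proposal itself, it is a plan rather than a proof, and you say so explicitly (``I expect \dots\ to be where essentially all the work lies''). Your outline is in the right spirit --- Postle and Thomas do proceed by a Thomassen-type induction on near-triangulations --- but the genuine content of their argument is precisely the bookkeeping you defer: controlling how the two distinguished short lists interact with chord splits when $x$ and $y$ can land on opposite sides, and handling the configurations that arise when the chord is incident with $x$ or $y$. That analysis occupies two JCTB papers and is not a routine extension of Thomassen's proof; a reader could not reconstruct it from what you have written. Your ``only if'' sketch for part~(i) is also incomplete: the hypothesis concerns proper $3$-colourings of the boundary $C$, not of the whole near-triangulation, so the obstruction you describe (forcing $c(x)=c(y)$ and then exhibiting incompatible lists) needs to be built on the outer cycle alone, and you have not shown how the failure of such a boundary colouring converts into an explicit bad $(1,2,3,\ldots,3,5,\ldots,5)$ list assignment.
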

Extendability is naturally transformed into plane graphs by allowing interior vertices to have a list of colours of length 5. 
One may ask, whether is it possible to restate the above theorem in the terms of a graph polynomial, i.e. to extend, at least partially Theorem~\ref{thm:general2} to planar graphs. Our partial results suggest that it is possible.

\end{document}